\newtheorem*{thm*}{Theorem}
\newcommand{\na}{\nabla}
\newcommand{\R}{{\mathbb R}}
\newtheorem{thm}{Theorem}
\newtheorem{prop}{Proposition}
\newtheorem{lemma}{Lemma}
\newtheorem{cor}{Corollary}
\theoremstyle{definition}
\newtheorem{rem}{Remark}
\newcommand{\ve}{{\varepsilon}}
\newcommand{\rmd}{{\rm d}}
\newcommand{\be}{\begin{equation}}
\newcommand{\ee}{\end{equation}}
\newcommand{\p}{\partial}
\newcommand{\ba}{\begin{array}{l}}
\newcommand{\ea}{\end{array}}
\newcommand{\Pra}{{\mathsf {Pr}}}
\newcommand{\LL}{{\mathsf{L}_{\mathsf {s}}}}
\newcommand{\cs}{{c_{\mathsf {s}}}}
\newcommand{\Ra}{{\mathsf {Ra}}}
\newcommand{\Nu}{{\mathsf {Nu}}}
\title[]{Bounds on heat flux for Rayleigh-B\'enard convection\\ between Navier-slip fixed-temperature boundaries}
\author{Theodore D. Drivas}
\address{Department of Mathematics, Stony Brook University,
Stony Brook, NY, 11794}
\email{tdrivas@math.stonybrook.edu}
\author{Huy Q. Nguyen}
\address{Department of Mathematics, Brown University, Providence, RI 02912}
\email{huy$\_$quang$\_$nguyen@brown.edu}
\author{Camilla Nobili}
\address{Departement Mathematik, Universit\"at Hamburg, Germany }
\email{ camilla.nobili@uni-hamburg.de }
\date{today}
\date{today}
\begin{document}

\begin{abstract}
We study two-dimensional  Rayleigh-B\'enard convection with Navier-slip, fixed temperature boundary conditions and establish bounds on the Nusselt number. As the slip-length varies with Rayleigh number $\Ra$, this estimate interpolates between the Whitehead--Doering bound by $\Ra^{\frac{5}{12}}$ for free-slip conditions \cite{WD11} and classical Doering--Constantin  $\Ra^{\frac{1}{2}}$ bound \cite{DC96}.
\end{abstract}

\maketitle

\section{Introduction}

The standard Rayleigh-B\'enard convection model describes the dynamics of a fluid layer confined between two rigid plates held at different uniform temperatures: the lower plate is hot and the upper plate is cool. 
This temperature difference triggers density variations of the fluid layers and instability ensues, leading to a convective fluid motion and, as the   control parameter Rayleigh number $\Ra$ increases,  eventually becomes turbulent. 
Rayleigh-B\'enard convection is a paradigm of nonlinear dynamics, including pattern formation and fully developed turbulence and has important applications in meteorology, oceanography and industry. 
A principal quantity of interest due to its relevance in geophysical and industrial applications is the vertical heat transport across the domain. This is usually expressed through the non-dimensional  Nusselt number  $\Nu$, which is  the ratio between the total heat flux and the flux due to thermal conduction. Famously, experiment and numerical simulation suggest a power-law scaling for the Nusselt number $\Nu$
$$\Nu\sim \Pra^{\alpha}\Ra^{\beta}\, \quad \mbox{ for some } \alpha, \beta\in \R\,,$$
where $\Ra$ and $\Pra$ are the non-dimensional Rayleigh and Prandtl number, respectively. In \cite{GL2000} a systematic theory for the scaling of the Nusselt number $\Nu$ is proposed, based on the decomposition of the global thermal and kinetic energy dissipation rates into their
boundary layer and bulk contributions.
As such, it is of  interest to provide mathematical constraints on allowed exponents from the equations of motion.

In physical theories, scaling laws are based, in part, on the structure of (thermal and viscous) boundary layers. It is therefore interesting to understand how the heat transport properties change with respect to different choice of boundary conditions for the velocity.
Most research has focused on the cases where the velocity field satisfies the no-slip  \cite{CN016, CD99,DC96,W08} and free-slip boundary conditions \cite{Wa20,W20,WD11,WD12}.
In this paper we consider the nondimensional Rayleigh-B\'enard convection model subject to Navier-slip boundary conditions. We note that, in contrast to the free-slip boundary conditions studied by Whitehead--Doering, the Navier-slip boundary conditions allow for vorticity to be produced at the boundary.  In a sense, these conditions interpolate between the no-slip and free-slip conditions as the slip length is increased from $0$ to $\infty$.  As such our bounds degenerate to those available for no-slip in the small slip length regime.
As we show later in this paper, the bound
$\Nu\lesssim \Ra^{\frac 12}$,
holds uniformly in Prandtl number in any dimension and for any boundary conditions such that the vertical component of the velocity is zero at the (upper and lower) boundaries.  At fixed $\Pr$, this bound corresponds to the classical Spiegel--Kraichnan scaling and has since been termed the ``ultimate regime''. To this day, there is active debate regarding the validity of the ultimate regime insofar as it can be inferred from data \cite{JD09,D19,Z18}.
We remark that the bound holds in any dimension and for any of the three type of boundary conditions mentioned above and its estimation uses only non-penetration of the velocity at the walls.

We now describe our setup precisely. Let $\Omega = [0,\Gamma]\times[0,1]$ be the channel with boundaries at $\{x_2=0\}$ and $\{x_2=1\}$ and periodic in $x_1$. We consider the Rayleigh-B\'enard system \cite{OPN17}
\begin{align} \label{e1}
\frac{1}{\Pra}(\partial_t u+u\cdot\nabla u)+\nabla p-\Delta u&=\Ra Te_2, \qquad \text{in}\ \  \Omega\,,\\  \label{e2}
\nabla\cdot u&=0, \qquad \quad\quad \  \text{in}\ \  \Omega,\\  \label{e3}
\partial_tT+u\cdot \nabla T&=\Delta T, \qquad\quad\ \text{in}\ \  \Omega,\\  \label{e4}
\partial_2 u_1&=\frac{1}{\LL} u_1, \qquad \ \  \text{on}\ \  \{x_2=0\},\\  \label{e5}
-\partial_2 u_1&=\frac{1}{\LL} u_1, \qquad \ \  \text{on}\ \   \{x_2=1\},\\  \label{e6}
u_2&=0, \qquad \ \quad \ \ \  \text{on}\ \  \{x_2=0\}\cup \{x_2=1\},\\  \label{e7}
T&=1, \qquad\quad \ \ \ \  \text{on}\ \  \{x_2=0\},\\  \label{e8}
T&=0, \qquad\quad \ \ \ \  \text{on}\ \  \{x_2=1\}.
\end{align}
In the horizontal direction $x_1$, all the unknowns are $\Gamma$-periodic.  See Figure \ref{fig:RB} for a depiction of the setup in 2d.  For higher dimensions, $e_2$ in equation \eqref{e1} becomes $e_d$ and the boundary conditions are  \eqref{e5}--\eqref{e6} in all tangential components. There are two nondimensional parameters appearing in the system: the Rayleigh number $\Ra$ which expresses the strength of the thermal forcing and the Prandtl number $\Pra$ which represents the ratio of kinematic viscosity to thermal diffusivity. 

\begin{figure}[h!]
   \centering
\includegraphics[width=0.67\textwidth]{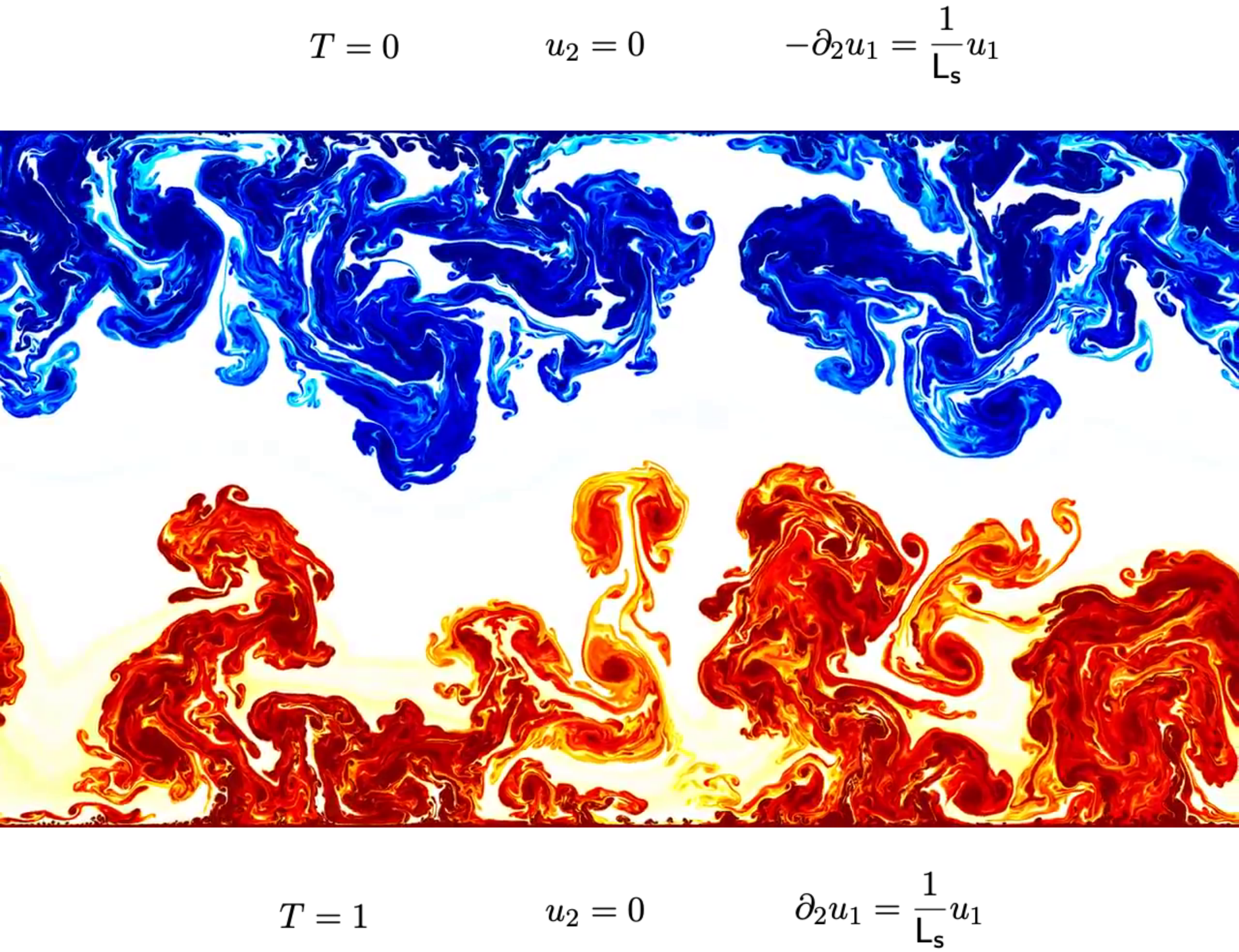}
    \caption{Visualization (with data from no-slip convection \cite{L15}) of temperature field.}   \label{fig:RB}
\end{figure}

As \eqref{e1}--\eqref{e8} is already non-dimensional, the Nusselt number is defined simply by
\begin{equation}\label{Nu-def}
\Nu :=\langle u_2 T-\partial_2 T\rangle, 
\end{equation}
 where we have introduced notation for the long-time, global-in-space average
\be
\langle \varphi \rangle=\limsup_{T \rightarrow \infty}\frac{1}{T }\int_0^{T}\frac{1}{\Gamma}\int_{0}^{\Gamma}\int_0^1\varphi (x_1,x_2,t) \, \rmd x_2\, \rmd x_1\, \rmd t.
\ee
We shall also write $\langle \varphi \rangle_{x_j}$ for the long-time and $x_j$ average. Our main result is the following:
\begin{thm}\label{theorem}\label{maintheorem}
Let $\LL>0$. Then
\begin{itemize}
\item For any $d\geq 2$, we have
\be
 \Nu \lesssim \Ra^{\frac{1}{2}}.
\ee
\item  For $d=2$, if $\Pr$ satisfies $\LL ^2\Pra^2\geq \Ra^{\frac{3}{2}}$, then  for all $\Ra>1$ it holds
 \be\label{theo:mainbound}
 \Nu \lesssim \Ra^{\frac{5}{12}}+\LL^{-2}\Ra^{\frac 12}.
 \ee
 \end{itemize}
 The implicit constants depend only on $\Gamma$, $\| T_0\|_{L^\infty}$ and $\|u_0\|_{W^{1, r}}$ for any fixed $r\in (2, \infty)$.
  \end{thm}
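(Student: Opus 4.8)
The plan is to combine the standard a priori balances for \eqref{e1}--\eqref{e8} with boundary-layer estimates centred at the walls. First I would record, for sufficiently regular solutions, the identities obtained by testing \eqref{e3} against $T$ and against $x_2$ and testing \eqref{e1} against $u$, then taking the long-time average $\langle\cdot\rangle$: the maximum principle $\|T(\cdot,t)\|_{L^\infty}\le M:=\max\{\|T_0\|_{L^\infty},1\}$; the thermal identity $\langle|\nabla T|^2\rangle=\Nu$; the Navier-slip energy balance $\langle|\nabla u|^2\rangle+\tfrac1\LL\langle|u_{\mathrm{tan}}|^2\rangle_{\partial\Omega}=\Ra(\Nu-1)$, hence $\langle|\nabla u|^2\rangle\le\Ra(\Nu-1)$ for any of the admissible velocity boundary conditions; and the height-independence of the mean vertical heat flux, $\langle u_2 T-\partial_2 T\rangle_{x_1}(z)\equiv\Nu$ for all $z\in[0,1]$, together with $\langle u_2\rangle_{x_1}(z)\equiv0$. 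For the first bullet, averaging the flux identity over a slab $z\in(0,h)$ gives
\[
\Nu=\frac1h\big\langle u_2 T\,\mathbf{1}_{\{0<x_2<h\}}\big\rangle-\frac1h\big(\langle T\rangle_{x_1}(h)-1\big)\le\frac Mh\big\langle |u_2|\,\mathbf{1}_{\{x_2<h\}}\big\rangle+\frac{2M}{h}.
\]
Since $u_2$ vanishes on $\{x_2=0\}$ we have $|u_2(x_1,x_2,t)|\le x_2^{1/2}\|\partial_2 u_2(x_1,\cdot,t)\|_{L^2_{x_2}}$, so by Cauchy--Schwarz and the energy balance the first term is $\lesssim M h^{1/2}(\Ra\,\Nu)^{1/2}$; optimizing $h\sim(\Ra\,\Nu)^{-1/3}$ yields $\Nu\lesssim M(\Ra\,\Nu)^{1/3}$, i.e. $\Nu\lesssim\Ra^{1/2}$. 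Only non-penetration and the good sign of the viscous boundary term are used, so this is uniform in $d$, $\Pra$ and $\LL$.

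For the second bullet I would follow the Whitehead--Doering approach for two-dimensional free-slip convection, which gains an extra power of $x_2$ in the near-wall bound for $u_2$ by working with the vorticity. Taking the curl of \eqref{e1} gives $\tfrac1\Pra(\partial_t\omega+u\cdot\nabla\omega)-\Delta\omega=\Ra\,\partial_1 T$ with $\omega=\partial_1 u_2-\partial_2 u_1$, and the Navier-slip conditions translate into $\omega=-\tfrac1\LL u_1$ on $\{x_2=0\}$ and $\omega=\tfrac1\LL u_1$ on $\{x_2=1\}$ — nonzero wall vorticity, which is the key difference from free slip. Testing against $\omega$ (the inertial term drops because $u_2|_{\partial\Omega}=0$) gives $\langle|\nabla\omega|^2\rangle$ on the left, a term $\le\Ra M\langle|\nabla\omega|^2\rangle^{1/2}$ on the right, and a boundary term $\int_{\partial\Omega}\omega\,\partial_n\omega$ which, after inserting the slip relation and the momentum equation restricted to the walls, reduces to contributions carrying the pressure trace and the prefactors $\tfrac1\LL$ and $\tfrac1{\Pra\LL}$. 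Controlling these by trace inequalities and the a priori bounds, while running an auxiliary $H^1$-in-$u$ estimate to absorb the inertial contributions they generate, is exactly what forces the hypothesis $\LL^2\Pra^2\ge\Ra^{3/2}$, and it yields a palinstrophy bound $\langle|\nabla\omega|^2\rangle\lesssim\Ra^2$ up to admissible lower-order terms; two-dimensional elliptic regularity ($\Delta u=\nabla^\perp\omega$) then gives $\langle|\nabla^2 u|^2\rangle\lesssim\Ra^2+\dots$.

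With $u_2|_{\partial\Omega}=0$ and the divergence-free and slip conditions — which give $\partial_2 u_2=-\partial_1 u_1=\LL\,\partial_1\omega$ on $\{x_2=0\}$ — one writes $u_2(x_1,x_2,t)=x_2\,\partial_2 u_2(x_1,0,t)+\int_0^{x_2}(x_2-s)\,\partial_s^2 u_2\,\rmd s$, so near the wall $|u_2|\lesssim x_2\,|\partial_2 u_2(x_1,0,t)|+x_2^{3/2}\|\partial_2^2 u_2(x_1,\cdot,t)\|_{L^2_{x_2}}$. Combining this with the analogous near-wall control of $\theta:=T-(1-x_2)$ (using $\theta|_{\partial\Omega}=0$ and $\langle|\nabla\theta|^2\rangle=\Nu-1$), feeding it into the slab-averaged flux identity of the first step, and re-optimizing the slab width $h$, the bulk (palinstrophy) part produces the exponent $\Ra^{5/12}$, while the wall-vorticity part — whose magnitude is governed, through $\omega|_{\partial\Omega}=\mp\tfrac1\LL u_1$ and the energy balance, by negative powers of $\LL$ — produces the additive correction $\LL^{-2}\Ra^{1/2}$; sending $\LL\to\infty$ recovers the Whitehead--Doering bound $\Ra^{5/12}$.

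The principal obstacle is the higher-order estimate under Navier slip: since vorticity is produced at the wall, the palinstrophy identity is no longer clean as in the free-slip case, and one must control $\int_{\partial\Omega}\omega\,\partial_n\omega$ together with the wall traces of $\omega$ and its tangential derivative — which bring in the pressure trace and, through the momentum equation, the inertial term — so that both $\LL$ and $\Pra$ must be large, combining into $\LL^2\Pra^2\ge\Ra^{3/2}$; keeping track of the residual wall contributions is what yields the $\LL^{-2}\Ra^{1/2}$ term and the degeneration to $\Ra^{1/2}$ as $\LL\to0$. A secondary but necessary issue is to secure enough a priori regularity of $(u,T)$ to justify the vorticity energy identity and the traces used above, which is the source of the dependence of the implicit constants on $\|u_0\|_{W^{1,r}}$, $r\in(2,\infty)$.
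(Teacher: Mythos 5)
Your first bullet is fine: averaging the flux identity over a slab of width $h$, using only $u_2|_{\partial\Omega}=0$ and the energy balance, is essentially the paper's proof of $\Nu\lesssim\Ra^{1/2}$ (there packaged through the background profile), and your optimization $h\sim(\Ra\,\Nu)^{-1/3}$ is correct. The second bullet, however, contains a concrete error and a misidentified mechanism. In your near-wall expansion $|u_2|\lesssim x_2|\partial_2u_2(x_1,0,t)|+x_2^{3/2}\|\partial_2^2u_2(x_1,\cdot,t)\|_{L^2}$, the wall term is \emph{not} governed by negative powers of $\LL$: by your own identity $\partial_2u_2=-\partial_1u_1=\LL\,\partial_1\omega$ on $\{x_2=0\}$, it carries a positive power of $\LL$, and equivalently it survives as $\LL\to\infty$ --- it is exactly the contribution already present for free slip, which Whitehead--Doering control not by Taylor expansion but through the interpolation $\|\partial_2u_2\|_{L^\infty_{x_2}}^2\lesssim\|\partial_2u_2\|_{L^2_{x_2}}\|\partial_2^2u_2\|_{L^2_{x_2}}$. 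So this term cannot produce the correction $\LL^{-2}\Ra^{1/2}$. In the paper that correction comes from an entirely different place: the pressure boundary term $\frac{1}{\LL}\langle p\,\partial_1u_1|_{x_2=0,1}\rangle$ in the averaged enstrophy balance, which is absorbed using the $H^1$ pressure estimate $\|p\|_{H^1}\lesssim\frac{1}{\LL}\|\partial_1\omega\|_{L^2}+\Ra\|T\|_{L^2}+\frac{1}{\Pra}\|\omega\|_{L^2}\|\omega\|_{L^r}$ at the cost of an additive $M\Ra^2$ with $M\sim a\LL^{-2}$, $a\sim\Ra^{-3/2}$; the $\frac1\Pra$ term in that estimate, with $\|\omega\|_{L^r}$ bounded uniformly in time, is precisely what generates the hypothesis $\LL^2\Pra^2\ge\Ra^{3/2}$. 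Those uniform-in-time $L^p$ vorticity bounds are themselves a key lemma under Navier slip (vorticity is produced at the wall, so the naive enstrophy estimate does not close); the paper obtains them by a comparison/maximum-principle argument following \cite{LNP05}, and they are also needed to justify dropping the time derivatives in the long-time enstrophy balance. None of this machinery appears in your outline, and the ``auxiliary $H^1$-in-$u$ estimate'' you invoke is exactly the step that does not close without it.

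Beyond that, the overall strategy --- first prove a palinstrophy bound $\langle|\nabla\omega|^2\rangle\lesssim\Ra^2$, then re-optimize the slab width --- does not deliver the exponent $5/12$. With the correct near-wall inputs ($|u_2|\lesssim x_2\|\partial_2u_2\|_{L^2_{x_2}}^{1/2}\|\partial_2^2u_2\|_{L^2_{x_2}}^{1/2}$ and $|\theta|\lesssim x_2^{1/2}\|\partial_2\theta\|_{L^2_{x_2}}$) and the a priori bounds $\langle|\omega|^2\rangle\le\Ra(\Nu-1)$, $\langle|\nabla\omega|^2\rangle\lesssim\Ra^2$, the boundary-layer term scales like $\delta^{3/2}\Ra^{3/4}\Nu^{3/4}$ and optimizing in $\delta$ gives only $\Nu\lesssim\Ra^{3/7}$, which is weaker than $\Ra^{5/12}$; your $x_2^{3/2}$ bookkeeping, which would yield $2/5$, tacitly assumes $\partial_2u_2$ vanishes at the wall, which is false even for free slip. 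The exponent $5/12$ in \cite{WD11} and in this paper comes from keeping \emph{both} the energy and enstrophy balances as constraints inside the background-method quadratic functional: one adds $a\times$(enstrophy balance) and $\frac{b}{\Ra}\times$(energy balance) to the $\Nu$ identity and estimates $a\Ra\langle\omega\,\partial_1\theta\rangle\le\frac{a^2\Ra^2}{2}\langle|\omega|^2\rangle+\frac12\langle|\partial_1\theta|^2\rangle$, paying with the thermal dissipation available inside the functional rather than with the crude palinstrophy bound; this is what permits $a\sim\Ra^{-3/2}$ and $\delta\sim\Ra^{-5/12}$ and makes the functional nonnegative. As written, your sequential use of the palinstrophy bound discards exactly this structure, so the argument would not produce the claimed bound \eqref{theo:mainbound}.
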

Note that when $\LL =  \cs\Ra^\alpha$ with $\cs>0$ then for $ \Pra\geq\cs^{-1}\Ra^{\frac{3}{4}-\alpha}$ the bound \eqref{theo:mainbound} reads
 \be\label{Nubd}
 \Nu \lesssim \Ra^{p(\alpha)}, \qquad p(\alpha):=\begin{cases} \frac{5}{12} &\text{if } \alpha \geq  \frac{1}{24}\\
\frac{1}{2}-2\alpha &\text{if } 0 \leq  \alpha \leq \frac{1}{24}
 \end{cases}.
 \ee
Theorem \ref{theorem} recovers the Whitehead--Doering bound of \cite{WD11} in 2d with $\LL=\infty$ and of \cite{WD12} in 3d with $\LL=\Pra=\infty$.  For smaller slip-lengths, the bound \eqref{Nubd} approaches the classical result of Doering--Constantin \cite{DC96}.  Our result improves upon available bounds at fixed Prandtl numbers when the system is equipped with no-slip boundary conditions instead of \eqref{e4}--\eqref{e5} provided that the slip-length is sufficiently large $\LL\geq \cs \Ra^{\frac{3}{4}}$, suggesting that the Navier-slip conditions may slightly inhibit turbulent heat transport. We remark that the work of Choffrut-Nobili-Otto \cite{CN016} for no-slip boundaries (in arbitrary dimensions) gives $\Nu\lesssim \Ra^{\frac 13}$ for $\Pr\gtrsim \Ra^{\frac 13}$, which improves the bound over Doering--Constantin in that regime.  Similar arguments may improve our estimates in that case. 
Moreover we observe that
for the 3d model with free-slip boundary conditions, Wang and Whitehead proved the estimate $\Nu\lesssim \Ra^{\frac{5}{12}}+\rm{Gr^2}\Ra^{\frac 14}$ where the Grashof number $\rm{Gr}=\frac{\Ra}{\Pr}$ is small.
 
\begin{rem}[Infinite Prandtl number]
For  $d\geq 2$, $\Pra=\infty$, J. Whitehead (unpublished) proved $ \Nu \lesssim \Ra^{\frac{5}{12}}$ for all $\LL>0$.  In Remark \ref{whiteheadproof}, we show how this follows from our argument.
\end{rem}

Inspired by \cite{WD11}, we employ the background field method with the simple ansatz of a background profile $\tau(x_2)$ being constant in the bulk and linear in the boundary layers of size $\delta$. 
Since the Navier-slip conditions allow vorticity production at the walls, our argument is delicate in a number of places compared to that for free-slip conditions. 
A consequence of the vorticity production at the walls is the lack of conservation of the mean of  $u_1$. As a result, our uniform-in-time bound for the kinetic energy grows linearly with the slip-length $\LL$ (see Lemma \ref{bndu} and Remark \ref{rema:Poincare}). Another consequence is that the uniform-in-time bound for the enstrophy does not follow directly from an energy estimate for the vorticity equation. Here, following an idea in \cite{LNP05}, we establish the  uniform $L^p$ bounds
\be\label{intro:Lp}
\|\omega(t)\|_{L^p}  \le C\Big(\|\omega_0\|_{L^p} +\frac{1}{\LL}\| u_0\|_{L^2} +\Ra\Big) \qquad \forall t>0,~p\in [1, \infty).
\ee
 Firstly, \eqref{intro:Lp} yields the long-time average enstrophy balance \eqref{aventrophy}. Secondly, \eqref{intro:Lp} is carefully combined with an appropriate pressure estimate (see \eqref{bound:p}) to handle the bad boundary term in \eqref{Q:9} in such a way that our Nusselt bound \eqref{theo:mainbound} recovers the result in \cite{WD11} when $\LL\to \infty$. 

Following \cite{WD11}, we use the long-time average energy/enstrophy balances and reduce the proof of \eqref{theo:mainbound} to establishing the positivity of certain quadratic functional $\mathcal{Q}$ (see Prop. \ref{qdef})  when parameters are suitably chosen. By obtaining a new estimate for the term $\langle \tau' u_2\theta\rangle$ generated by the background field, we bypass a Fourier argument in \cite{WD11} and base the proof entirely in physical space. 
\section{Energy Identities and Uniform Bounds}

In what follows, we always consider smooth initial data so that the system \eqref{e1}-\eqref{e8} has a unique global smooth solution.  See e.g. \cite{CMR98,HWWXY18}. 
We will repeatedly use that 
$\|T(t)\|_{L^\infty(\Omega)}\leq \max\{1,\|T_0\|_{L^\infty(\Omega)}\}$ for all $t\geq 0$ by the maximum principle. Without loss of generality, we consider initial data $\|T_0\|_{L^\infty(\Omega)}\leq 1$ so that 
\begin{equation}\label{max-prin}
\|T\|_{L^\infty(\Omega)}\le 1.
\end{equation}
Now we recall the well-known (see e.g. \cite{DC96}) identification of the Nusselt number with the heating rate
\begin{prop}\label{nussident}
 The Nusselt number satisfies $\Nu=\langle |\nabla T|^2\rangle$.
\end{prop}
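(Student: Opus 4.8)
The plan is to prove the identity exactly as in the classical no‑slip/free‑slip setting (cf.\ \cite{DC96}); the Navier conditions \eqref{e4}--\eqref{e5} play no role, only incompressibility \eqref{e2}, non‑penetration \eqref{e6} and the fixed‑temperature conditions \eqref{e7}--\eqref{e8} enter. I will derive two ``flux balances'', valid up to total time‑derivatives of spatial integrals of $T$ and $T^2$ (which average to zero by the uniform bound \eqref{max-prin}), and subtract them.

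\textbf{Step 1 (the transport equation).} Rewriting the advection term in \eqref{e3} as $\nabla\cdot(uT)$ via \eqref{e2} and integrating over $x_1\in[0,\Gamma]$ (periodicity kills the $\partial_1$‑terms) gives, for $\bar T(x_2,t):=\tfrac1\Gamma\int_0^\Gamma T\,\rmd x_1$ and $\mathcal J(x_2,t):=\tfrac1\Gamma\int_0^\Gamma(u_2T-\partial_2T)\,\rmd x_1$, the one‑dimensional balance $\partial_t\bar T+\partial_2\mathcal J=0$. Integrating in $s$ from $0$ to $x_2$ and then in $x_2$ over $[0,1]$,
\[
\int_0^1\mathcal J(x_2,t)\,\rmd x_2-\mathcal J(0,t)=-\partial_t\!\!\int_0^1\!\!\int_0^{x_2}\!\!\bar T(s,t)\,\rmd s\,\rmd x_2 ,
\]
whose right side is a total time‑derivative of a quantity bounded uniformly in $t$ by \eqref{max-prin}. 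Since $u_2|_{x_2=0}=0$ by \eqref{e6}, one has $\mathcal J(0,t)=-\tfrac1\Gamma\int_0^\Gamma\partial_2T(x_1,0,t)\,\rmd x_1$; hence, applying $\tfrac1T\int_0^T(\cdot)\,\rmd t$ and recalling $\int_0^1\mathcal J\,\rmd x_2=\tfrac1\Gamma\int_\Omega(u_2T-\partial_2T)\,\rmd x$,
\[
\tfrac1T\!\int_0^T\!\tfrac1\Gamma\!\int_\Omega(u_2T-\partial_2T)\,\rmd x\,\rmd t \;=\; -\,\tfrac1T\!\int_0^T\!\tfrac1\Gamma\!\int_0^\Gamma\partial_2T(x_1,0,t)\,\rmd x_1\,\rmd t \;+\; o(1) .
\]

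\textbf{Step 2 (the $L^2$ identity for $T$).} Multiply \eqref{e3} by $T$ and integrate over $\Omega$. The advection term vanishes, $\int_\Omega Tu\cdot\nabla T=\tfrac12\int_\Omega\nabla\cdot(uT^2)\,\rmd x=\tfrac12\oint_{\partial\Omega}(u\cdot n)T^2\,\rmd S=0$, by \eqref{e2}, \eqref{e6} and periodicity ($n$ the outward normal). Integration by parts in the diffusion term, using \eqref{e7}--\eqref{e8} (the face $\{x_2=1\}$ drops since $T=0$ there, and on $\{x_2=0\}$ one has $n=-e_2$, $T=1$), gives
\[
\frac12\frac{\rmd}{\rmd t}\int_\Omega T^2\,\rmd x=-\int_\Omega|\nabla T|^2\,\rmd x-\int_0^\Gamma\partial_2T(x_1,0,t)\,\rmd x_1 .
\]
Dividing by $\Gamma$ and applying $\tfrac1T\int_0^T(\cdot)\,\rmd t$, the left side is $o(1)$ as $T\to\infty$ because $\|T(t)\|_{L^2(\Omega)}^2\le\Gamma$ uniformly by \eqref{max-prin}, leaving
\[
\tfrac1T\!\int_0^T\!\tfrac1\Gamma\!\int_\Omega|\nabla T|^2\,\rmd x\,\rmd t \;=\; -\,\tfrac1T\!\int_0^T\!\tfrac1\Gamma\!\int_0^\Gamma\partial_2T(x_1,0,t)\,\rmd x_1\,\rmd t \;+\; o(1) .
\]

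\textbf{Step 3 (conclusion) and the one subtle point.} Subtracting the two boxed‑off identities, the running averages $\tfrac1T\int_0^T\tfrac1\Gamma\int_\Omega|\nabla T|^2$ and $\tfrac1T\int_0^T\tfrac1\Gamma\int_\Omega(u_2T-\partial_2T)$ differ by $o(1)$, hence have the same $\limsup$ as $T\to\infty$; by \eqref{Nu-def} this is precisely $\langle|\nabla T|^2\rangle=\Nu$. There is no real obstacle here; the only thing that needs care is that $\langle\cdot\rangle$ is a $\limsup$, not a limit, so one may not split the $\limsup$ of a sum — which is exactly why I keep the common wall term $\tfrac1T\int_0^T\tfrac1\Gamma\int_0^\Gamma\partial_2T(x_1,0,t)\,\rmd x_1\,\rmd t$ as a pivot in Steps 1--2 and only ever compare two quantities that differ by $o(1)$. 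The legitimacy of discarding the $o(1)$ terms rests solely on the uniform‑in‑time bound \eqref{max-prin} coming from the maximum principle.
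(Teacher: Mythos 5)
Your proof is correct and follows essentially the same route as the paper: the $L^2$ balance for $T$ identifies $\langle|\nabla T|^2\rangle$ with the mean wall flux $-\langle\partial_2 T|_{x_2=0}\rangle_{x_1}$, and the horizontally averaged heat-flux conservation identifies $\Nu$ with that same wall flux. The only difference is bookkeeping: you keep finite-time averages and compare quantities differing by $o(1)$ before taking $\limsup$, which is a slightly more careful handling of the same argument the paper carries out directly with $\langle\cdot\rangle$.
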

\begin{proof}
Multiplying the temperature equation \eqref{e3} by $T$, integrating by part in space, and using the incompressibility condition \eqref{e2} and the boundary conditions for $u_2$ and $T$, we get  
\[
\frac{1}{2} \frac{\rmd}{\rmd t} \|T\|_{L^2(\Omega)}^2 = - \|\nabla T\|_{L^2(\Omega)}^2 - \int_0^\Gamma \partial_2 T \big|_{x_2=0} \rmd x_1.
\]
Since $\| T(t)\|_{L^2(\Omega)}$ is uniformly bounded in $t$,  averaging in time yields
\[
\langle |\nabla T|^2\rangle  =-\langle\partial_2 T\big|_{x_2=0}  \rangle_{x_1},
\]
where $\langle \cdot \rangle_{x_1}$ denotes the long time and $x_1$ average. On the other hand, if we integrate \eqref{e3} in $x_1$ and time average, we find $\partial_2 \langle u_2 T - \partial_2 T\rangle_{x_1} =0$. Integrating in $x_2$ gives 
\[
\langle u_2 T - \partial_2 T\rangle_{x_1} =\langle (u_2 T - \partial_2 T)\big|_{x_2=0}\rangle_{x_1} =\langle -\partial_2 T\big|_{x_2=0}\rangle_{x_1}.
\]
In view of the definition \eqref{Nu-def}, we deduce that $\Nu=-\langle\partial_2 T\big|_{x_2=0}\rangle_{x_1} =\langle |\nabla T|^2\rangle $.
\end{proof}

\begin{prop}[Energy Balance]\label{balance:u}
Strong solutions of \eqref{e1}--\eqref{e8} satisfy the balance
\be\label{energyeq}
\frac{1}{2\Pra} \frac{\rmd}{\rmd t} \|u\|_{L^2}^2 + \|\nabla u \|_{L^2}^2  + \frac{1}{\LL} \left(\|u_1\|_{L^2(\{x_2=1\})}^2 + \|u_1\|_{L^2(\{x_2=0\})}^2\right)=  \Ra \int_\Omega u_2 T\rmd x.
\ee
\end{prop}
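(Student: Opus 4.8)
The plan is to derive \eqref{energyeq} by testing the momentum equation \eqref{e1} against $u$ and integrating over $\Omega$, splitting into the four natural pieces; since we work with the smooth solution, all integrations by parts below are justified. For the material-derivative term, write the advective part as $\frac{1}{\Pra}\int_\Omega (u\cdot\nabla u)\cdot u\,\rmd x=\frac{1}{2\Pra}\int_\Omega u\cdot\nabla|u|^2\,\rmd x$ and integrate by parts: the bulk term vanishes by incompressibility \eqref{e2}, while the boundary term $\frac{1}{2\Pra}\int_{\partial\Omega}|u|^2\,(u\cdot n)\,\rmd S$ vanishes because $u\cdot n=\pm u_2=0$ on $\{x_2=0\}\cup\{x_2=1\}$ by \eqref{e6} and the contributions from $\{x_1=0\}$ and $\{x_1=\Gamma\}$ cancel by $\Gamma$-periodicity. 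Hence $\frac{1}{\Pra}\int_\Omega(\partial_t u+u\cdot\nabla u)\cdot u\,\rmd x=\frac{1}{2\Pra}\frac{\rmd}{\rmd t}\|u\|_{L^2}^2$. The pressure term $\int_\Omega\nabla p\cdot u\,\rmd x=-\int_\Omega p\,(\nabla\cdot u)\,\rmd x+\int_{\partial\Omega}p\,(u\cdot n)\,\rmd S$ vanishes for exactly the same two reasons, and the buoyancy term on the right-hand side is already $\Ra\int_\Omega u_2 T\,\rmd x$.

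The one place where the Navier-slip conditions enter is the viscous term. Integrating $-\int_\Omega\Delta u\cdot u\,\rmd x$ by parts componentwise gives $\|\nabla u\|_{L^2}^2-\int_{\partial\Omega}(\partial_n u)\cdot u\,\rmd S$, where $\partial_n=n\cdot\nabla$ with $n$ the outward unit normal (again the periodic sides contribute nothing). On $\{x_2=0\}$ we have $n=-e_2$, so $(\partial_n u)\cdot u=-(\partial_2 u_1)u_1-(\partial_2 u_2)u_2$; the second term is killed by $u_2=0$ from \eqref{e6}, and the first equals $-\LL^{-1}u_1^2$ by \eqref{e4}. On $\{x_2=1\}$ we have $n=e_2$, so $(\partial_n u)\cdot u=(\partial_2 u_1)u_1=-\LL^{-1}u_1^2$ by \eqref{e5}, using $u_2=0$ once more. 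Therefore $-\int_{\partial\Omega}(\partial_n u)\cdot u\,\rmd S=\LL^{-1}\big(\|u_1\|_{L^2(\{x_2=0\})}^2+\|u_1\|_{L^2(\{x_2=1\})}^2\big)$, and collecting all four pieces yields \eqref{energyeq}.

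I do not expect a genuine obstacle here: this is the standard energy identity for \eqref{e1}--\eqref{e8}, and the only points that require a little care are getting the signs of the normal-derivative boundary terms correct on the top versus the bottom plate, and noticing that the $\partial_2 u_2$ contribution on the walls is eliminated by non-penetration $u_2=0$ rather than by any (unavailable) control on $\partial_2 u_2$. The identical computation goes through in dimension $d\geq 2$, with $u_1$ replaced by the tangential velocity $(u_1,\dots,u_{d-1})$ and \eqref{e4}--\eqref{e5} imposed in every tangential component, producing $\LL^{-1}\sum_{j=1}^{d-1}\big(\|u_j\|_{L^2(\{x_d=0\})}^2+\|u_j\|_{L^2(\{x_d=1\})}^2\big)$ in place of the two boundary terms above.
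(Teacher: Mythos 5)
Your proposal is correct and follows essentially the same route as the paper: test \eqref{e1} with $u$, kill the advection and pressure terms via incompressibility, periodicity and non-penetration, and convert the viscous boundary terms into the $\LL^{-1}$ trace terms using \eqref{e4}--\eqref{e5}. The sign bookkeeping on the top and bottom plates matches the paper's computation, so there is nothing to add.
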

\begin{proof}
Dotting equation \eqref{e1} with $u$, integrating over $\Omega$ and using \eqref{e2} and \eqref{e6}, we find
\[
\frac{1}{2\Pra} \frac{\rmd}{\rmd t} \|u\|_{L^2}^2 = \int_\Omega u \cdot \Delta u + \Ra \int_\Omega u_2 T\rmd x.
\]
Using the  periodicity and \eqref{e4}, \eqref{e5} and \eqref{e6} gives
\begin{align*}
 \int_\Omega u\cdot \Delta u\ \rmd x&=-\| \na u\|_{L^2}^2+\int_0^\Gamma \left(u\cdot \p_2 u \Big|_{x_2=1}-u\cdot \p_2 u\Big|_{x_2=0}\right)\rmd x_1\\
 & = -\| \na u\|_{L^2}^2+\int_0^\Gamma\left( \partial_2 u_1 u_1\Big|_{x_2=1}- \partial_2 u_1 u_1\Big|_{x_2=0}\right) \rmd x_1\\
  & = -\| \na u\|_{L^2}^2- \frac{1}{\LL} \left(\|u_1\|_{L^2(\{x_2=1\})}^2 + \|u_1\|_{L^2(\{x_2=0\})}^2\right).
\end{align*}
 \end{proof}
 
From the energy balance, we find that the kinetic energy is bounded for all times.
 \begin{lemma}\label{bndu}
The energy of $u$ satisfies the following bound
 \be\label{uniformbound:u}
 \|u(t)\|_{L^2}\leq \| u_0\|_{L^2}e^{-t\frac 23\Pra\min\{1, \frac{1}{\LL}\}} +3\Gamma\max\{ 1, {\LL}\}  \Ra,\qquad\forall t>0.
 \ee
  \end{lemma}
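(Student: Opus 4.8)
The plan is to promote the energy balance \eqref{energyeq} of Proposition~\ref{balance:u} to a closed linear differential inequality for $y(t):=\|u(t)\|_{L^2}^2$ and then apply Gr\"onwall. The one genuinely nontrivial ingredient is a Poincar\'e-type inequality compatible with the Navier-slip conditions: unlike in the free-slip case, the horizontal component $u_1$ neither vanishes at the walls nor has a conserved horizontal mean, so the control of $\|u_1\|_{L^2(\Omega)}$ must be extracted from $\|\nabla u\|_{L^2}$ together with the boundary terms already sitting on the left of \eqref{energyeq}.

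\textbf{Step 1 (a Navier-slip Poincar\'e inequality).} Since $u_2=0$ on $\{x_2=0\}\cup\{x_2=1\}$, the one-dimensional Poincar\'e inequality in the $x_2$-variable gives $\|u_2\|_{L^2(\Omega)}\le C\,\|\partial_2 u_2\|_{L^2(\Omega)}\le C\,\|\nabla u\|_{L^2}$. For $u_1$ I would write, for a.e.\ $x_1$,
\[
u_1(x_1,x_2)^2=u_1(x_1,0)^2+2\int_0^{x_2}u_1\,\partial_2 u_1\,\rmd s ,
\]
integrate in $x_2$ and then in $x_1$, and use Cauchy--Schwarz and Young to absorb $\|u_1\|_{L^2(\Omega)}$ from the right, obtaining $\|u_1\|_{L^2(\Omega)}^2\le C\big(\|u_1\|_{L^2(\{x_2=0\})}^2+\|\partial_2 u_1\|_{L^2(\Omega)}^2\big)$. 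Summing the two bounds and using $\|u_1\|_{L^2(\{x_2=0\})}^2\le \max\{1,\LL\}\cdot\frac{1}{\LL}\big(\|u_1\|_{L^2(\{x_2=0\})}^2+\|u_1\|_{L^2(\{x_2=1\})}^2\big)$ together with $\|\nabla u\|_{L^2}^2\le \max\{1,\LL\}\|\nabla u\|_{L^2}^2$ yields
\[
\|u\|_{L^2}^2\le C\max\{1,\LL\}\Big(\|\nabla u\|_{L^2}^2+\frac{1}{\LL}\big(\|u_1\|_{L^2(\{x_2=1\})}^2+\|u_1\|_{L^2(\{x_2=0\})}^2\big)\Big).
\]

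\textbf{Step 2 (closing the inequality and Gr\"onwall).} On the right of \eqref{energyeq} I would bound $\Ra\int_\Omega u_2T\,\rmd x\le \Ra|\Omega|^{1/2}\|u\|_{L^2}$ by Cauchy--Schwarz and the maximum principle \eqref{max-prin}. By Step~1 the dissipative terms on the left of \eqref{energyeq} dominate $\tfrac{c}{\max\{1,\LL\}}\|u\|_{L^2}^2$; applying Young's inequality to the right-hand side so as to absorb half of this coercive term produces, with $y=\|u\|_{L^2}^2$,
\[
\frac{1}{2\Pra}\,y'+\frac{c}{2\max\{1,\LL\}}\,y\le C\,\Gamma\max\{1,\LL\}\,\Ra^2 .
\]
This is a linear ODE $y'+\lambda y\le\mu$ with $\lambda\sim \Pra\min\{1,\tfrac{1}{\LL}\}$ and $\mu\sim\Gamma\max\{1,\LL\}^2\Pra\,\Ra^2$; integrating gives $y(t)\le y(0)e^{-\lambda t}+\mu/\lambda$, and since $\mu/\lambda\sim\Gamma\max\{1,\LL\}^2\Ra^2$, taking square roots and using $\sqrt{a+b}\le\sqrt a+\sqrt b$ gives a bound of the shape \eqref{uniformbound:u} once the fractions absorbed and the Poincar\'e constants are tracked to match the stated numerology.

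\textbf{Main obstacle.} The delicate point is Step~1. Because $u_1$ does not vanish at the boundary, the naive Poincar\'e inequality fails, and the missing control of the horizontal mean of $u_1$ must be recovered from the weak, $\tfrac{1}{\LL}$-weighted boundary dissipation in \eqref{energyeq}. This is exactly what forces the factor $\max\{1,\LL\}$, hence the linear-in-$\LL$ growth of the kinetic energy bound; it is why \eqref{uniformbound:u} degrades as $\LL\to\infty$ (compare Remark~\ref{rema:Poincare}) and ultimately a source of the correction $\LL^{-2}\Ra^{1/2}$ in Theorem~\ref{maintheorem}.
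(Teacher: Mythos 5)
Your proposal is correct and is essentially the paper's own argument: the same fundamental-theorem-of-calculus Poincar\'e-type inequality bounding $\|u\|_{L^2(\Omega)}^2$ by $\|\nabla u\|_{L^2}^2$ plus the trace term $\|u_1\|_{L^2(\{x_2=0\})}^2$ (the source of the $\max\{1,\LL\}$ factor), combined with the energy balance and Gr\"onwall. The only cosmetic difference is that the paper derives the differential inequality directly for $\|u(t)\|_{L^2}$ (dividing by the norm rather than applying Young's inequality to the buoyancy term and later taking square roots), which is how it lands on the explicit constants $\tfrac23\Pra\min\{1,\tfrac1\LL\}$ and $3\Gamma\max\{1,\LL\}\Ra$.
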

 \begin{proof}
 From the fundamental theorem of calculus we have
\[
|u_1(x_1, x_2)|^2\le 2|u_1(x_1, 0)|^2+2\left(\int_0^{x_2}|\p_2 u_1(x_1, y)| \rmd y\right)^2\le 2|u_1(x_1, 0)|^2+2x_2 \int_0^1|\p_2 u_1(x_1, y)|^2\rmd y
\]
and thus upon integrating over $\Omega$, we obtain 
\begin{align*}
\| u_1\|_{L^2(\Omega)}^2
&\le 2\| u_1(\cdot, 0)\|_{L^2(0, \Gamma)}^2+2\| \p_2 u_1\|_{L^2(\Omega)}^2.
\end{align*}
Combing this with the  Poincar\'e inequality $\| u_2\|_{L^2(\Omega)}\le \| \p_2 u_2\|_{L^2(\Omega)}$, we obtain 
\[
\| u\|_{L^2(\Omega)}^2\le 2\| u_1(\cdot, 0)\|_{L^2(0, \Gamma)}^2+3\| \nabla u\|_{L^2(\Omega)}^2.
\]
In addition,  the temperature $T$ obeys the maximum principle \eqref{max-prin}; hence   $\|T(t)\|_{L^2}\leq |\Omega|\|T(t)\|_{L^\infty}\le \Gamma$. Then, Proposition  \ref{balance:u} gives
\be
\frac{1}{\Pra} \frac{\rmd}{\rmd t} \|u\|_{L^2}  \leq   2 \Ra\Gamma- \frac 23\min\{ 1, \tfrac{1}{\LL}\} \| u \|_{L^2}.
\ee
 \end{proof}
 \begin{rem}\label{rema:Poincare}
 Consider the free-slip boundary conditions $u_2=0$ and $\p_2u_1=0$ on $x_2=0,\, 1$, which can be formally obtained by setting $\LL= \infty$ in \eqref{e6}-\eqref{e7}. The (spatial) mean of $u_1$ is conserved upon integrating the first component of \eqref{e1}.  Appealing to the Galilean symmetry of the system, one can assume without loss of generality that the mean of $u_1$ is zero for all time.  Consequently, the Poincar\'e inequality $\| u\|_{L^2}\le C\| \nabla u\|_{L^2}$ holds. Then, the energy  balance
 \[
 \frac{1}{2\Pra} \frac{\rmd}{\rmd t} \|u\|_{L^2}^2 = -\| \nabla u\|_{L^2}^2 + \Ra \int_\Omega u_2 T\rmd x
 \]
 yields the uniform bound $\| u(t)\|_{L^2}\le e^{-\frac{t}{C}}\| u_0\|_{L^2}+C\| T_0\|_{L^\infty}\Ra$. This bound is better than \eqref{uniformbound:u} by the factor $\LL$ in front of $\Ra$. On the other hand, for the Navier-slip boundary condition, the mean of $u_1$ is not conserved due to the generation of vorticity at the walls.
  \end{rem}
\begin{cor}[Average Energy Balance] \label{aveenbal}
The following balance holds
\be
\langle |\nabla u|^2\rangle+ \frac{1}{\LL}\left( \langle u_1^2\big|_{x_2=1} \rangle + \langle u_1^2\big|_{x_2=0} \rangle \right)  =  \Ra( \Nu-1).
\ee
\end{cor}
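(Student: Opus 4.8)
The plan is to time-average the energy balance \eqref{energyeq} of Proposition \ref{balance:u} and use the uniform kinetic-energy bound of Lemma \ref{bndu} to discard the time-derivative term. Concretely, I would rewrite \eqref{energyeq} as
\[
\|\nabla u\|_{L^2}^2 + \frac{1}{\LL}\big(\|u_1\|_{L^2(\{x_2=1\})}^2 + \|u_1\|_{L^2(\{x_2=0\})}^2\big) + \frac{1}{2\Pra}\frac{\rmd}{\rmd t}\|u\|_{L^2}^2 = \Ra\int_\Omega u_2 T\,\rmd x,
\]
divide by $\Gamma$, apply $\frac1T\int_0^T(\cdot)\,\rmd t$, and then send $T\to\infty$. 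The contribution of the exact time derivative is $\frac{1}{2\Pra\Gamma T}(\|u(T)\|_{L^2}^2-\|u_0\|_{L^2}^2)$, which tends to $0$ since $\|u(t)\|_{L^2}$ is bounded uniformly in $t$ by Lemma \ref{bndu}. Unwinding the definition of $\langle\cdot\rangle$ for the bulk terms and of the long-time/$x_1$-average for the two boundary integrals (which depend only on $x_1$ and $t$), this gives
\[
\langle|\nabla u|^2\rangle + \frac{1}{\LL}\big(\langle u_1^2\big|_{x_2=1}\rangle + \langle u_1^2\big|_{x_2=0}\rangle\big) = \Ra\,\langle u_2 T\rangle.
\]

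To finish, I would identify $\langle u_2 T\rangle$ with $\Nu-1$. By linearity of $\langle\cdot\rangle$ and the definition \eqref{Nu-def}, $\Nu = \langle u_2 T\rangle - \langle\partial_2 T\rangle$, so it suffices to check $\langle\partial_2 T\rangle = -1$. This is immediate from the fixed-temperature conditions \eqref{e7}--\eqref{e8}: averaging $\partial_2 T$ first over $x_2\in[0,1]$ gives $\int_0^1\partial_2 T\,\rmd x_2 = T|_{x_2=1}-T|_{x_2=0} = -1$, and the remaining $x_1$- and time-averages leave $-1$. (Equivalently one may combine $\Nu=\langle|\nabla T|^2\rangle$ with the identity $\langle u_2 T-\partial_2 T\rangle_{x_1}=-\langle\partial_2 T|_{x_2=0}\rangle_{x_1}$ from the proof of Proposition \ref{nussident}.) Substituting back yields the asserted balance.

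The only step that genuinely uses a nontrivial input is the vanishing of the time-averaged $\tfrac{\rmd}{\rmd t}\|u\|_{L^2}^2$ term, which is precisely where Lemma \ref{bndu} enters; without a uniform-in-time energy bound one would obtain only an inequality rather than an identity. I would also note, for rigor, that since $\langle\cdot\rangle$ is defined through $\limsup$, the cleanest formulation is to apply $\limsup_{T\to\infty}$ to the single combination $\|\nabla u\|_{L^2}^2 + \tfrac1\LL(\cdots) - \Ra\int_\Omega u_2 T\,\rmd x$, whose finite-time average equals the vanishing boundary term and hence has the honest limit $0$; no other part of the argument requires care.
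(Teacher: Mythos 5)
Your proposal is correct and follows essentially the same route as the paper: time-average the energy balance \eqref{energyeq}, discard the exact time-derivative term via the uniform bound of Lemma \ref{bndu}, and use the temperature boundary conditions \eqref{e7}--\eqref{e8} to identify $\langle u_2 T\rangle=\Nu-1$. Your extra remark about handling the $\limsup$ by averaging the combined expression is a reasonable refinement of the same argument, not a different approach.
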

\begin{proof}
Using the boundary conditions for the temperature \eqref{e7}--\eqref{e8}, one finds
\be
\Nu=1+\langle u_2T\rangle
\ee 
from the definition \eqref{Nu-def}. 
Then the claim follows upon integrating \eqref{energyeq} in time and taking the long time limit using the uniform  bound for $\| u(t)\|_{L^2}$  given by Lemma \ref{bndu}.
\end{proof}

\begin{prop}[Pressure-Poisson equation]\label{presseqn}
The pressure in \eqref{e1} satisfies 
\begin{align}  \label{pe1}
\Delta p &= -\frac{1}{\Pra} \nabla u ^T: \nabla u  +   \Ra \partial_2 T \qquad\quad \  \ \text{in}\ \  \Omega\,,\\  \label{pe2}
-\partial_2  p&=\frac{1}{\LL} \partial_1 u_1- \Ra \qquad\qquad\qquad \ \ \ \ \ \ \  \text{on}\ \  \{x_2=0\},\\  \label{pe3}
\partial_2 p&=\frac{1}{\LL} \partial_1 u_1 \qquad\qquad\qquad\qquad \ \ \  \ \ \ \ \   \text{on}\ \   \{x_2=1\}.
\end{align}
\end{prop}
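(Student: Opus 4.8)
The plan is to obtain \eqref{pe1} by taking the divergence of the momentum equation \eqref{e1}, and to obtain the Neumann-type boundary conditions \eqref{pe2}--\eqref{pe3} by restricting the vertical component of \eqref{e1} to the two walls.

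For the interior equation, I apply $\div$ to \eqref{e1}. By incompressibility \eqref{e2}, $\div u = 0$, so $\partial_t \div u = 0$ and $\Delta \div u = 0$, and the only surviving nonlinear term is $\div(u\cdot\nabla u) = \partial_i(u_j \partial_j u_i) = \partial_i u_j\,\partial_j u_i + u_j \partial_j(\div u) = \nabla u^T : \nabla u$. The buoyancy term contributes $\Ra\,\partial_2 T$, which gives \eqref{pe1}.

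For the boundary conditions, I take the $x_2$-component of \eqref{e1},
\[
\frac{1}{\Pra}\big(\partial_t u_2 + u\cdot\nabla u_2\big) + \partial_2 p - \Delta u_2 = \Ra\, T,
\]
and evaluate it on $\{x_2=0\}$ and on $\{x_2=1\}$. Since $u_2 \equiv 0$ on each wall by \eqref{e6}, one has $\partial_t u_2 = 0$ and $\partial_1 u_2 = 0$ there, so the transport term vanishes; moreover $\partial_1^2 u_2 = 0$ on the wall, so $\Delta u_2$ reduces to the normal second derivative $\partial_2^2 u_2$ on either wall. The crux is to compute $\partial_2^2 u_2$ at the wall: differentiating the incompressibility relation $\partial_2 u_2 = -\partial_1 u_1$ once more in $x_2$ gives $\partial_2^2 u_2 = -\partial_1 \partial_2 u_1$. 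On $\{x_2=0\}$, the Navier-slip condition \eqref{e4} reads $\partial_2 u_1 = \LL^{-1} u_1$ as an identity in $x_1$, so applying the tangential derivative $\partial_1$ yields $\partial_2^2 u_2 = -\LL^{-1}\partial_1 u_1$ there; on $\{x_2=1\}$, \eqref{e5} gives $\partial_2 u_1 = -\LL^{-1} u_1$ and hence $\partial_2^2 u_2 = \LL^{-1}\partial_1 u_1$. Substituting these, together with the temperature boundary values \eqref{e7}--\eqref{e8} ($T=1$ at $x_2=0$, $T=0$ at $x_2=1$), into the vertical momentum equation above produces exactly \eqref{pe2} and \eqref{pe3}.

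The step I expect to be the main (indeed, essentially the only) subtlety is the evaluation of $\partial_2^2 u_2$ on the boundary: one must use that $\Delta u_2$ coincides with $\partial_2^2 u_2$ on each wall \emph{only because} $u_2$ vanishes identically along the wall, so the tangential part of the Laplacian drops, and one must correctly combine the once-differentiated incompressibility identity with the tangential derivative of the Navier-slip condition, carefully tracking the sign discrepancy between the two walls coming from the opposite outward normals in \eqref{e4} versus \eqref{e5}. The remaining manipulations are direct substitutions.
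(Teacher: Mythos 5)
Your proposal is correct and follows essentially the same route as the paper: take the divergence of \eqref{e1} for the interior identity, and trace the vertical component of \eqref{e1} on the walls, using $u_2=0$ there together with incompressibility and the tangential derivative of the Navier-slip conditions \eqref{e4}--\eqref{e5} to replace $\partial_2^2 u_2$ by $\mp\LL^{-1}\partial_1 u_1$. Your write-up simply makes explicit the steps (vanishing of the inertial terms and of $\partial_1^2 u_2$ on the walls) that the paper leaves implicit.
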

\begin{proof}
The equation \eqref{pe1} follows from taking the divergence of the momentum equation.  The boundary conditions come from tracing the second component of the momentum equation along the boundaries.  Specifically, one has
\[
\partial_2 p  = \partial_2^2 u_2 + \Ra T=- \partial_1 \partial_2 u_1 + \Ra T,
\]
where $\p_2u_1$  is given by \eqref{e4} and \eqref{e5}.
\end{proof}
\begin{prop}\label{proppress}
For any $r\in (2, \infty)$, there exists $C=C(r, \Gamma)$ such that
\be\label{bound:p}
\| p\|_{H^1(\Omega)} \le C\Big(\frac{1}{\LL} \|\partial_1 \omega\|_{L^2(\Omega)}+\Ra \| T\|_{L^2(\Omega)}+ \frac{1}{\Pr}\| \omega\|_{L^2(\Omega)}\|\omega\|_{L^r(\Omega)}\Big).
 \ee
\end{prop}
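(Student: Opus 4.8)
\noindent\emph{Proof strategy.} The plan is to view $p$ as the solution of the Neumann problem of Proposition~\ref{presseqn} and to split it according to the three forcing mechanisms. Write $p=p_{\Ra}+p_{\LL}+p_{\mathrm{nl}}$, where each summand has zero mean over $\Omega$ and, with $n$ the outward unit normal on $\partial\Omega$, solves
\begin{gather*}
\Delta p_{\Ra}=\div(\Ra T e_2)\ \text{in}\ \Omega,\qquad \partial_n p_{\Ra}=\Ra T e_2\cdot n\ \text{on}\ \partial\Omega;\\
\Delta p_{\LL}=0\ \text{in}\ \Omega,\qquad \partial_n p_{\LL}=\tfrac{1}{\LL}\partial_1 u_1\ \text{on}\ \partial\Omega;\\
\Delta p_{\mathrm{nl}}=-\tfrac{1}{\Pr}\,\nabla u^T:\nabla u\ \text{in}\ \Omega,\qquad \partial_n p_{\mathrm{nl}}=0\ \text{on}\ \partial\Omega.
\end{gather*}
Each problem satisfies the compatibility condition $\int_\Omega\Delta(\cdot)=\int_{\partial\Omega}\partial_n(\cdot)$; for $p_{\mathrm{nl}}$ this reads $\int_\Omega\partial_i u_j\partial_j u_i=\int_{\partial\Omega}(e_2\cdot n)\,u_j\partial_j u_2=0$, since on the flat walls $u_2\equiv0$ (hence also the tangential derivative $\partial_1 u_2=0$ there). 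By Proposition~\ref{presseqn} the sum solves the same Neumann problem as $p$, so it equals $p$ up to a constant, and since each piece is mean-free, $\|p_\bullet\|_{H^1}\lesssim_\Gamma\|\nabla p_\bullet\|_{L^2}$ by Poincaré; it remains to bound the three gradients, and $\Omega$ being the smooth periodic channel there are no corner issues.

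For $p_{\Ra}$ the source is in divergence form and compatible with the Neumann data, so testing the weak formulation against $p_{\Ra}$ yields $\|\nabla p_{\Ra}\|_{L^2}^2=\Ra\int_\Omega T\,\partial_2 p_{\Ra}\le\Ra\|T\|_{L^2}\|\nabla p_{\Ra}\|_{L^2}$, giving the middle term. For $p_{\mathrm{nl}}$ the source need not lie in $L^2$, so I would invoke $L^q$ elliptic regularity for the Neumann Laplacian with the tailored exponent $q=\tfrac{2r}{r+2}\in(1,2)$, so that $\|p_{\mathrm{nl}}\|_{W^{2,q}}\lesssim_{r,\Gamma}\tfrac{1}{\Pr}\|\nabla u^T:\nabla u\|_{L^q}$. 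Two features of this $q$ close the estimate: first, $W^{1,q}(\Omega)\hookrightarrow L^{q^{*}}(\Omega)$ with $q^{*}=r>2$, so $\|\nabla p_{\mathrm{nl}}\|_{L^2}\lesssim_\Gamma\|\nabla p_{\mathrm{nl}}\|_{L^r}\lesssim_{r,\Gamma}\|p_{\mathrm{nl}}\|_{W^{2,q}}$; second, $2q=\tfrac{4r}{r+2}$ is exactly the exponent with $\tfrac{1}{2q}=\tfrac12\big(\tfrac12+\tfrac1r\big)$, the interpolation midpoint between $2$ and $r$, so from the pointwise bound $|\nabla u^T:\nabla u|=|\partial_i u_j\partial_j u_i|\le|\nabla u|^2$ and interpolation, $\|\nabla u^T:\nabla u\|_{L^q}\le\|\nabla u\|_{L^{2q}}^2\le\|\nabla u\|_{L^2}\|\nabla u\|_{L^r}$. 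Finally, since $u$ is divergence-free with $u_2=0$ on the walls and $\Omega$ is the periodic channel, one has $u=\nabla^\perp\psi+c\,e_1$ with $c$ a spatial constant and $\Delta\psi=\omega$, $\psi|_{\partial\Omega}=0$; as the constant drops under $\nabla$, elliptic regularity for the Dirichlet Laplacian gives $\|\nabla u\|_{L^p}=\|\nabla\nabla^\perp\psi\|_{L^p}\lesssim_{p,\Gamma}\|\Delta\psi\|_{L^p}=\|\omega\|_{L^p}$ for $1<p<\infty$ (with constant $1$ when $p=2$, the walls being flat). This yields $\|p_{\mathrm{nl}}\|_{H^1}\lesssim_{r,\Gamma}\tfrac{1}{\Pr}\|\omega\|_{L^2}\|\omega\|_{L^r}$.

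The delicate piece — and the one I expect to cost the most work — is $p_{\LL}$, which carries the Navier-slip data. Testing its Neumann problem against $p_{\LL}$ and using the trace inequality $\|p_{\LL}\|_{L^2(\partial\Omega)}\lesssim_\Gamma\|p_{\LL}\|_{H^1(\Omega)}\lesssim_\Gamma\|\nabla p_{\LL}\|_{L^2}$ gives $\|\nabla p_{\LL}\|_{L^2}\lesssim_\Gamma\tfrac{1}{\LL}\|\partial_1 u_1\|_{L^2(\partial\Omega)}$. The crucial observation is that while the trace of $u$ itself is \emph{not} controlled by the vorticity alone — the mean horizontal velocity is a priori unbounded, cf.\ Remark~\ref{rema:Poincare} — the field $\partial_1 u$ is divergence-free, tangent to the walls (its vertical component $\partial_1 u_2$ vanishes on $\{x_2=0,1\}$ because $u_2\equiv0$ there), \emph{and} has zero mean in the periodic variable $x_1$. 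Hence the curl--gradient $L^2$ identity (valid for divergence-free fields with vanishing normal trace on the flat periodic $\Omega$, proved by one integration by parts) gives $\|\nabla\partial_1 u\|_{L^2}=\|\partial_1\omega\|_{L^2}$, and the one-dimensional Poincaré inequality in $x_1$ gives $\|\partial_1 u\|_{L^2}\lesssim_\Gamma\|\partial_1^2 u\|_{L^2}\le\|\nabla\partial_1 u\|_{L^2}=\|\partial_1\omega\|_{L^2}$; together $\|\partial_1 u\|_{H^1(\Omega)}\lesssim_\Gamma\|\partial_1\omega\|_{L^2(\Omega)}$, so the trace theorem yields $\|\partial_1 u_1\|_{L^2(\partial\Omega)}\lesssim_\Gamma\|\partial_1\omega\|_{L^2(\Omega)}$ and thus $\|p_{\LL}\|_{H^1}\lesssim_\Gamma\tfrac{1}{\LL}\|\partial_1\omega\|_{L^2(\Omega)}$. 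Summing the three bounds gives \eqref{bound:p}; it is precisely the passage through $\partial_1 u$ rather than $u$ that produces the factor $\|\partial_1\omega\|_{L^2}$ (instead of $\|\omega\|_{L^2}$) and keeps the constant independent of the initial data.
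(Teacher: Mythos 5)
Your proof is correct, but it follows a genuinely different route from the paper's. The paper makes a single duality/energy argument: it computes $\int_\Omega p\,\Delta p$ twice, once from the Neumann data \eqref{pe2}--\eqref{pe3} and once from the PDE \eqref{pe1} integrated against $p$ using the temperature boundary values, so that the otherwise uncontrollable term $\Ra\int_0^\Gamma p|_{x_2=0}\,\rmd x_1$ cancels; the three surviving terms are then estimated with the trace inequality, the two-dimensional embedding $H^1\subset L^q$ (pairing $\|p\|_{L^q}\|\nabla u\|_{L^2}\|\nabla u\|_{L^r}$ with $\tfrac1q=\tfrac12-\tfrac1r$), and the appendix estimates $\|\nabla u\|_{L^2}=\|\omega\|_{L^2}$, $\|\partial_2 u_2\|_{H^1}\lesssim\|\partial_1\omega\|_{L^2}$ from \eqref{elliptic:u2:2} after writing $\partial_1u_1=-\partial_2u_2$. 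You instead decompose $p$ into three mean-zero Neumann problems, which removes the need for that cancellation because the $-\Ra$ flux is absorbed into the divergence-form problem for $p_{\Ra}$; your compatibility checks are right (the nonlinear one uses exactly $\div u=0$ and $u_2=\partial_1u_2=0$ on the walls). For $p_{\LL}$ you replace the paper's streamfunction estimate by applying the Lemma \ref{vortidents} identity to the field $\partial_1 u$ (legitimate, since $\partial_1u$ is divergence-free, periodic, and $\partial_1u_2=\partial_1^2u_2=0$ on the walls, so the boundary term in that computation still vanishes) together with the Wirtinger--Poincar\'e inequality in $x_1$, giving $\|\partial_1u\|_{H^1}\lesssim\|\partial_1\omega\|_{L^2}$; this is a nice, self-contained alternative to invoking \eqref{elliptic:u2:2}. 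For $p_{\mathrm{nl}}$ you invoke $W^{2,q}$ Calder\'on--Zygmund regularity for the Neumann problem with $q=\tfrac{2r}{r+2}$ plus $W^{1,q}\hookrightarrow L^{r}$ and interpolation; this is heavier machinery than the paper's duality pairing (which needs only integration by parts and Sobolev embedding) but the exponent bookkeeping is correct and produces the same product $\tfrac1\Pra\|\omega\|_{L^2}\|\omega\|_{L^r}$. In sum: your modular splitting makes the three forcing mechanisms and the provenance of each term in \eqref{bound:p} transparent, at the cost of elliptic $L^q$ regularity for the Neumann Laplacian; the paper's proof is more elementary and reuses only the appendix lemmas, at the cost of a slightly less obvious cancellation. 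Both arguments hinge, as yours notes, on normalizing the pressure (and each of your pieces) to have zero mean so that Poincar\'e upgrades $\|\nabla p\|_{L^2}$ to $\|p\|_{H^1}$.
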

\begin{proof}
  On one hand, using the boundary conditions \eqref{pe2}--\eqref{pe3} gives
\begin{align*}
\int_\Omega p \Delta p\  \rmd x&= - \|p\|_{\dot{H}^1}^2 + \int_0^\Gamma p \partial_2 p|^{x_2=1}_{x_2=0} \rmd x_1\\
&= - \| \nabla p\|_{L^2}^2 + \frac{1}{\LL} \int_0^\Gamma \left(p \partial_1 u_1 |_{x_2=1} +p \partial_1 u_1|_{x_2=0} \right)\rmd x_1- \Ra  \int_0^\Gamma p |_{x_2=0} \rmd x_1.
\end{align*}
On the other hand, using  \eqref{pe1}, \eqref{e7} and \eqref{e8}, we find
\be\nonumber
\begin{aligned}
\int_\Omega p \Delta p  \ \rmd x
&=- \frac{1}{\Pr}\int_\Omega p \nabla u^T: \nabla u \ \rmd x  -\Ra\int_\Omega \partial_2 p T \rmd x -\Ra\int_0^\Gamma p |_{x_2=0} \rmd x_1.
\end{aligned}
\ee
Consequently,
\begin{align*}
\| \nabla p\|_{L^2}^2&= \frac{1}{\LL} \int_0^\Gamma \left(p \partial_1 u_1 |_{x_2=1} +p \partial_1 u_1|_{x_2=0} \right)\rmd x_1+\frac{1}{\Pr}\int_\Omega p \nabla u^T: \nabla u\ \rmd x   +\Ra\int_\Omega \partial_2 p T \rmd x.
\end{align*}
By virtue of the Sobolev trace inequality and H\"older's inequality, it follows that 
\begin{align*}
 \|\nabla p\|_{L^2}^2&\lesssim  \frac{1}{\LL}  \|p\|_{{H}^1}\|\partial_1 u_1\|_{{H}^1}+\Ra \|p\|_{{H}^1}\| T\|_{L^2}+ \frac{1}{\Pr}  \| p\nabla u^T:\nabla u\|_{L^1}.
\end{align*}
For any $r\in (2, \infty)$, letting $\frac{1}{q}=\frac{1}{2}-\frac{1}{r}$, we have $q\in (2, \infty)$ and 
\[
\| p\nabla u^T:\nabla u\|_{L^1}\le \| p\|_{L^q}\| \nabla u\|_{L^2}\| \nabla u\|_{L^r}\le C\| p\|_{H^1}\| \omega\|_{L^2}\| \omega\|_{L^r},
\]
where we use the Sobolev embedding and \eqref{elliptic:u2:2}.

Since $p$ has mean zero, we have $ \| p\|_{H^1}\le  C\| \nabla p\|_{L^2}$, so that upon using $\p_1u_1=-\p_2u_2$ we get 
\begin{align*}
 \|p\|_{H^1}&\lesssim  \frac{1}{\LL}  \|\partial_2 u_2\|_{{H}^1}+\Ra \| T\|_{L^2}+ \frac{C}{\Pr} \| \omega\|_{L^2}\| \omega\|_{L^r}.
\end{align*}
From  Lemma \ref{vortidents} and \eqref{elliptic:u2:2}, $\| \nabla u\|_{L^2}=\| \omega\|_{L^2}$ and $ \|\partial_2 u_2\|_{{H}^1}\le C\| \partial_1\omega\|_{L^2}$, whence \eqref{bound:p} follows. 
\end{proof}

\begin{prop}[Vorticity formulation]
The vorticity $\omega = \nabla^\perp \cdot u$ where $\nabla^\perp = (-\partial_2,\partial_1)$ satisfies
\begin{align} \label{ve1}
\frac{1}{\Pra}(\partial_t \omega +u\cdot \nabla \omega)-\Delta \omega&=\Ra \partial_1 T \qquad  \text{in}\ \  \Omega\,,\\  \label{ve2}
-\omega&=\frac{1}{\LL} u_1 \qquad \ \  \text{on}\ \  \{x_2=0\},\\  \label{ve3}
\omega&=\frac{1}{\LL} u_1 \qquad \ \  \text{on}\ \   \{x_2=1\}.
\end{align} 
\end{prop}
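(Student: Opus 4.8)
The plan is to derive the interior equation \eqref{ve1} by applying the two-dimensional curl $\nabla^\perp\cdot=(-\partial_2,\partial_1)\cdot$ to the momentum equation \eqref{e1}, and to extract the boundary conditions \eqref{ve2}--\eqref{ve3} by combining non-penetration \eqref{e6} with the Navier-slip relations \eqref{e4}--\eqref{e5}.

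First I would apply $\nabla^\perp\cdot$ to \eqref{e1}. Since $\nabla^\perp\cdot$ has constant coefficients it commutes with $\partial_t$ and $\Delta$, so $\nabla^\perp\cdot(\partial_t u)=\partial_t\omega$ and $\nabla^\perp\cdot(\Delta u)=\Delta\omega$; the pressure term $\nabla^\perp\cdot\nabla p$ vanishes identically; and $\nabla^\perp\cdot(\Ra T e_2)=\Ra\,\partial_1 T$ straight from the definition of $\nabla^\perp$. The one term needing care is the advection term: expanding $\nabla^\perp\cdot(u\cdot\nabla u)$ componentwise and regrouping, the second-order terms assemble into $u\cdot\nabla\omega$ while the remaining quadratic first-derivative terms collect into $(\nabla\cdot u)\,\omega$, which vanishes by \eqref{e2}. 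Dividing through by $\Pra$ gives \eqref{ve1}.

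For the boundary conditions, write $\omega=-\partial_2 u_1+\partial_1 u_2$. On each plate $\{x_2=0\}$ and $\{x_2=1\}$ the component $u_2$ vanishes identically by \eqref{e6}, hence so does its tangential derivative $\partial_1 u_2$, and therefore $\omega=-\partial_2 u_1$ there. Inserting \eqref{e4} on $\{x_2=0\}$ yields $\omega=-\tfrac{1}{\LL} u_1$, i.e. \eqref{ve2}, and inserting \eqref{e5} on $\{x_2=1\}$ yields $\omega=\tfrac{1}{\LL} u_1$, i.e. \eqref{ve3}. The only mildly delicate point in the whole argument is the planar identity $\nabla^\perp\cdot(u\cdot\nabla u)=u\cdot\nabla\omega$, which is classical but uses incompressibility in an essential way; beyond that the proof is a direct verification presenting no genuine obstacle.
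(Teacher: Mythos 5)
Your proposal is correct and follows exactly the paper's argument: take the curl of the momentum equation (using incompressibility so that the advection term becomes $u\cdot\nabla\omega$) to get \eqref{ve1}, and use $u_2=0$ on the plates so that $\omega=-\partial_2 u_1$ there, which combined with \eqref{e4}--\eqref{e5} gives \eqref{ve2}--\eqref{ve3} with the correct signs. You have merely spelled out the curl computation in more detail than the paper does.
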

\begin{proof}
Equation \eqref{ve1} follows from taking the curl of the momentum equation \eqref{e1}.  The boundary conditions \eqref{ve2}--\eqref{ve3} follow from the conditions \eqref{e4}--\eqref{e5} since the vorticity on the boundary is simply $\omega= -\partial_2 u_1$ upon recalling  \eqref{e6}.
\end{proof}

\begin{lemma}\label{normvort}
The  normal derivative of vorticity satisfies
\be  \label{dve1}
-\partial_2\omega=\frac{1}{\Pra}\big( \partial_t u_1+ u_1\partial_1 u_1\big) + \partial_1 p, \quad  \text{on} \quad  \{x_2=0\} \ \text{and} \  \{x_2=1\}.
\ee
\end{lemma}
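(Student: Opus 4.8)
The plan is to obtain \eqref{dve1} by tracing the \emph{horizontal} component of the momentum equation \eqref{e1} along the two flat boundaries, after first rewriting $\Delta u_1$ in terms of the vorticity.

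First I would record the planar identity $\Delta u_1 = -\partial_2\omega$, which holds throughout $\Omega$: since $\omega = \partial_1 u_2 - \partial_2 u_1$ and the incompressibility condition \eqref{e2} gives $\partial_2 u_2 = -\partial_1 u_1$, differentiating yields
\[
\partial_2 \omega = \partial_1 \partial_2 u_2 - \partial_2^2 u_1 = -\partial_1^2 u_1 - \partial_2^2 u_1 = -\Delta u_1 .
\]
Next, the first component of \eqref{e1} carries no buoyancy forcing, because $\Ra T e_2$ is purely vertical, so
\[
\Delta u_1 = \frac{1}{\Pra}\big( \partial_t u_1 + u\cdot\nabla u_1 \big) + \partial_1 p \qquad \text{in } \Omega .
\]
Combining the two displays gives $-\partial_2\omega = \frac{1}{\Pra}\big( \partial_t u_1 + u\cdot\nabla u_1 \big) + \partial_1 p$ in all of $\Omega$, not merely on the boundary.

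Finally I would restrict this identity to $\{x_2=0\}$ and $\{x_2=1\}$. On each of these lines $u_2\equiv 0$ by \eqref{e6}, so the convective term collapses there: $u\cdot\nabla u_1 = u_1\partial_1 u_1 + u_2\partial_2 u_1 = u_1\partial_1 u_1$. This is exactly \eqref{dve1}. (Alternatively, one can work on the boundary from the start: $u_2\equiv 0$ along $\{x_2=c\}$ forces all of its tangential derivatives to vanish, $\partial_1 u_2 = \partial_1^2 u_2 = 0$, while $\partial_2 u_2 = -\partial_1 u_1$ gives $\partial_1\partial_2 u_2 = -\partial_1^2 u_1$; hence $\partial_2\omega = -\Delta u_1$ on the boundary, and one proceeds as above.)

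The argument is entirely elementary and I do not expect a genuine obstacle; the only points needing a moment's care are the use of \eqref{e2} to trade $\Delta u_1$ for $-\partial_2\omega$, and the use of the non-penetration condition \eqref{e6} to discard the $u_2\partial_2 u_1$ contribution on the boundary. All manipulations are legitimate for the smooth solutions under consideration.
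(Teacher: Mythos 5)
Your proof is correct and follows essentially the same route as the paper: use incompressibility to write $\partial_2\omega=-\Delta u_1$, then trace the first component of the momentum equation on the boundaries, where $u_2=0$ reduces $u\cdot\nabla u_1$ to $u_1\partial_1 u_1$. No issues.
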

\begin{proof}
 Using incompressibility of $u$, we find
\begin{align}\label{d2vort}
\partial_2 \omega &=-\partial_2^2 u_1 + \partial_1\partial_2 u_2= -\Delta u_1.
\end{align}
From the first component of \eqref{e1} traced on the boundary (using $u_2=0$ there) we  have
\be\label{d1pressureident}
 \Delta u_1 = \frac{1}{\Pra}\big(\partial_t u_1+ u_1\partial_1 u_1\big) + \partial_1 p.
\ee
\end{proof}

\begin{prop}[Enstrophy Balance]\label{enstophbal}
The following identity holds
\begin{equation}\label{enstrophyeq}
\begin{aligned}
\frac{1}{2 \Pra}\frac{\rmd}{\rmd t} \|\omega\|_{L^2}^2 &+ \frac{1}{2 \LL\Pra}\frac{\rmd}{\rmd t} \left(\|u_1\|_{L^2(\{x_2=1\})}^2 + \|u_1\|_{L^2(\{x_2=0\})}^2\right)+ \|\nabla \omega\|_{L^2}^2 \\
&=  \frac{1}{\LL} \left( \int_0^\Gamma  p\partial_1 u_1 \Big|_{x_2=1} \rmd x_1+ \int_0^\Gamma   p\partial_1 u_1\Big|_{x_2=0} \rmd x_1\right) +  \Ra\int_\Omega \omega \partial_1 T \rmd x.
\end{aligned}
\end{equation}
\end{prop}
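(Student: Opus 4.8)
The plan is to test the vorticity equation \eqref{ve1} against $\omega$, integrate over $\Omega$, and dispose of each term in turn. The time-derivative term yields $\frac{1}{2\Pra}\frac{\rmd}{\rmd t}\|\omega\|_{L^2}^2$. For the advection term, writing $\frac{1}{\Pra}\int_\Omega (u\cdot\nabla\omega)\,\omega\,\rmd x = \frac{1}{2\Pra}\int_\Omega u\cdot\nabla(\omega^2)\,\rmd x$ and integrating by parts, the bulk piece vanishes because $\div u=0$ and the boundary piece vanishes by non-penetration $u_2=0$ on $\{x_2=0\}\cup\{x_2=1\}$ together with $x_1$-periodicity. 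The right-hand forcing term stays as $\Ra\int_\Omega \omega\,\partial_1 T\,\rmd x$. So the only work is in the viscous term.

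Integrating by parts in $\Omega=[0,\Gamma]\times[0,1]$ and discarding the $x_1$-boundary contribution by periodicity, one has
$-\int_\Omega \omega\Delta\omega\,\rmd x = \|\nabla\omega\|_{L^2}^2 - \int_0^\Gamma \omega\,\partial_2\omega\big|_{x_2=1}\,\rmd x_1 + \int_0^\Gamma \omega\,\partial_2\omega\big|_{x_2=0}\,\rmd x_1$.
To evaluate the two wall integrals I would substitute the Navier-slip form of the vorticity on the walls from \eqref{ve2}--\eqref{ve3}, namely $\omega=\LL^{-1}u_1$ on $\{x_2=1\}$ and $\omega=-\LL^{-1}u_1$ on $\{x_2=0\}$, together with the normal-derivative identity of Lemma \ref{normvort}, $\partial_2\omega = -\frac{1}{\Pra}(\partial_t u_1 + u_1\partial_1 u_1) - \partial_1 p$ on both walls. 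A short computation, in which the sign difference of $\omega$ on the two walls cancels against the sign difference of the outward normal, shows that $-\omega\,\partial_2\omega\big|_{x_2=1}$ and $+\omega\,\partial_2\omega\big|_{x_2=0}$ reduce to the common pointwise expression $\frac{1}{\LL\Pra}u_1(\partial_t u_1 + u_1\partial_1 u_1) + \frac{1}{\LL}u_1\partial_1 p$.

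It then remains to integrate this expression along each wall in $x_1$. By $x_1$-periodicity, $\int_0^\Gamma u_1^2\partial_1 u_1\,\rmd x_1 = \frac13\int_0^\Gamma \partial_1(u_1^3)\,\rmd x_1 = 0$ and $\int_0^\Gamma u_1\partial_1 p\,\rmd x_1 = -\int_0^\Gamma p\,\partial_1 u_1\,\rmd x_1$, while $\int_0^\Gamma u_1\partial_t u_1\,\rmd x_1 = \frac12\frac{\rmd}{\rmd t}\|u_1\|_{L^2(\{x_2=j\})}^2$ for $j=0,1$ (interchanging $\frac{\rmd}{\rmd t}$ with integration is legitimate by smoothness). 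Summing over the two walls, the viscous boundary contribution becomes $\frac{1}{2\LL\Pra}\frac{\rmd}{\rmd t}\big(\|u_1\|_{L^2(\{x_2=1\})}^2+\|u_1\|_{L^2(\{x_2=0\})}^2\big) - \frac{1}{\LL}\big(\int_0^\Gamma p\,\partial_1 u_1|_{x_2=1}\,\rmd x_1 + \int_0^\Gamma p\,\partial_1 u_1|_{x_2=0}\,\rmd x_1\big)$. Plugging this back into the tested equation and moving the pressure terms to the right gives exactly \eqref{enstrophyeq}.

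The main obstacle — in fact essentially the only one — is bookkeeping with signs: correctly combining the outward-normal orientation on $\{x_2=0\}$ versus $\{x_2=1\}$ with the sign flip of $\omega$ in \eqref{ve2}--\eqref{ve3} and with Lemma \ref{normvort}, so that the two wall kinetic-energy terms add up with coefficient $\frac{1}{2\LL\Pra}$ and the pressure terms emerge with coefficient $\frac{1}{\LL}$ on the right. Note no uniform bounds are needed here; \eqref{enstrophyeq} is a pointwise-in-time identity and all integrations by parts are justified by the assumed smoothness of the solution.
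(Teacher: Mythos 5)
Your proposal is correct and follows essentially the same route as the paper: test \eqref{ve1} against $\omega$, kill the advection term via incompressibility and non-penetration, integrate the viscous term by parts, and evaluate the wall integrals by substituting \eqref{ve2}--\eqref{ve3} and Lemma \ref{normvort}, with the $x_1$-periodicity identities $\int_0^\Gamma u_1^2\partial_1 u_1\,\rmd x_1=0$ and $\int_0^\Gamma u_1\partial_1 p\,\rmd x_1=-\int_0^\Gamma p\,\partial_1 u_1\,\rmd x_1$ producing exactly the time-derivative and pressure boundary terms of \eqref{enstrophyeq}. Your sign bookkeeping on the two walls matches the paper's computation, so no gap.
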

\begin{proof}
Multiplying  \eqref{ve1} by $\omega$ and integrating over the domain, we obtain
\be
\frac{1}{2 \Pra}\frac{\rmd}{\rmd t} \|\omega\|_{L^2}^2 =\int_\Omega \omega \Delta \omega \rmd x+  \Ra\int_\Omega \omega \partial_1 T \rmd x,
\ee
where we have use the non-penetration boundary conditions for the velocity \eqref{e6}.  Now note that 
\begin{align*}
\int_\Omega \omega \Delta \omega \rmd x&=  -\|\nabla \omega\|_{L^2}^2 + \int_0^\Gamma \omega \partial_2 \omega \Big|_{x_2=1} \rmd x_1 -  \int_0^\Gamma \omega \partial_2 \omega\Big|_{x_2=0} \rmd x_1\\
&=  -\|\nabla \omega\|_{L^2}^2 +\frac{1}{\LL}  \int_0^\Gamma u_1 \partial_2 \omega \Big|_{x_2=1} \rmd x_1 + \frac{1}{\LL}  \int_0^\Gamma u_1 \partial_2 \omega\Big|_{x_2=0} \rmd x_1\\
&=  -\|\nabla \omega\|_{L^2}^2 -\frac{1}{2\LL\Pra} \frac{\rmd}{\rmd t}\left( \int_0^\Gamma u_1^2  \Big|_{x_2=1} \rmd x_1+ \int_0^\Gamma u_1^2  \Big|_{x_2=0} \rmd x_1\right) \\
&\qquad + \frac{1}{\LL} \left( \int_0^\Gamma \partial_1 u_1 p  \Big|_{x_2=1} \rmd x_1+ \int_0^\Gamma \partial_1 u_1 p   \Big|_{x_2=0} \rmd x_1\right),
\end{align*}
where we have used Lemma \ref{normvort} together with periodicity of the function $u_1$ in $x_1$.  
\end{proof}

Next we provide uniform in time bounds for the vorticity

\begin{lemma}[$L^p$ vorticity bounds]\label{unifenstbdd}
Let  $\LL\geq 1$, $p\in [1, \infty)$. There is $C= C(p, \Gamma)<\infty$ so that
\begin{align}\label{vorticity:ub}
 \|\omega(t)\|_{L^p}  \le C\Big(\|\omega_0\|_{L^p} +\frac{1}{\LL}\| u_0\|_{L^2} +\Ra\Big) \qquad \forall t>0.
\end{align}
\end{lemma}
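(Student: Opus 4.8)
The plan is to follow the strategy of \cite{LNP05} for deriving uniform-in-time $L^p$ bounds on vorticity in the presence of a Robin-type boundary condition, adapted to our setting. The starting point is the vorticity equation \eqref{ve1} together with the boundary conditions \eqref{ve2}--\eqref{ve3}, which we rewrite as $\omega = \pm\LL^{-1} u_1$ on the respective walls. The key obstruction to a direct $L^p$ energy estimate is that, unlike in the Dirichlet case, $\omega$ does not vanish on $\partial\Omega$, so multiplying by $|\omega|^{p-2}\omega$ and integrating by parts produces a boundary term $\int_{\partial\Omega} |\omega|^{p-2}\omega\,\partial_n\omega$ that is not obviously signed; moreover, via Lemma \ref{normvort}, $\partial_2\omega$ on the boundary involves the pressure and the time derivative of $u_1$, which cannot be controlled pointwise.

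The way around this, following \cite{LNP05}, is to avoid tracing the normal derivative of $\omega$ on the boundary altogether and instead work with a shifted quantity. First I would introduce a harmonic (or suitably chosen) lift $\phi(t,x)$ of the boundary data, i.e. a function with $\phi = \pm\LL^{-1} u_1$ on the walls and controlled in terms of $\LL^{-1}\|u_1\|_{H^{1/2}(\partial\Omega)} \lesssim \LL^{-1}\|u\|_{H^1} = \LL^{-1}\|\omega\|_{L^2}$ (using Lemma \ref{vortidents}); then $\tilde\omega := \omega - \phi$ satisfies a forced drift-diffusion equation with homogeneous Dirichlet data. Testing that equation against $|\tilde\omega|^{p-2}\tilde\omega$ now yields a genuinely dissipative boundary contribution (the boundary term vanishes), at the cost of extra interior source terms involving $\phi$, $\partial_t\phi$, $u\cdot\nabla\phi$ and $\Delta\phi$. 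The incompressibility of $u$ kills the transport term in the $L^p$ norm, and the thermal forcing $\Ra\,\partial_1 T$ is bounded using \eqref{max-prin}, which is where the $\Ra$ term on the right of \eqref{vorticity:ub} enters. Assembling, one obtains a differential inequality of the form $\frac{\rmd}{\rmd t}\|\tilde\omega\|_{L^p} \le -c\|\tilde\omega\|_{L^p} + C(\Ra + \LL^{-1}\|u_0\|_{L^2})$, after absorbing the $\phi$-contributions using the uniform energy bound of Lemma \ref{bndu}; Grönwall then gives the claim for $\tilde\omega$, and hence for $\omega$ after adding back $\phi$.

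The main obstacle I anticipate is the handling of $\partial_t\phi$: since $\phi$ depends on $u_1|_{\partial\Omega}$, its time derivative involves $\partial_t u_1|_{\partial\Omega}$, which a priori is not controlled by the energy. One must either choose the lift cleverly so that this term can be integrated by parts back onto $\tilde\omega$ and absorbed, or — more in the spirit of \cite{LNP05} — avoid the explicit lift and instead test the vorticity equation against $|\omega|^{p-2}\omega$ directly while substituting Lemma \ref{normvort} for $\partial_2\omega$ on the boundary, then using the pressure estimate of Proposition \ref{proppress} to control the resulting $\int_{\partial\Omega}|\omega|^{p-2}u_1\,\partial_1 p$ term and an integration by parts in $x_1$ plus periodicity to handle the $\partial_t u_1$ piece by pairing it with $\frac{\rmd}{\rmd t}\|u_1\|_{L^2(\partial\Omega)}^2$-type quantities as was done in Proposition \ref{enstophbal}. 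For general $p$ this boundary analysis is delicate and is where most of the work lies; once it is in place, the interior estimates and the Grönwall step are routine, and the dependence on $\|\omega_0\|_{L^p}$, $\LL^{-1}\|u_0\|_{L^2}$ and $\Ra$ comes out as stated.
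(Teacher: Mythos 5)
Your proposal correctly identifies the central difficulty (the vorticity boundary data $\pm\LL^{-1}u_1$ is inhomogeneous, and its normal derivative drags in the pressure and $\partial_t u_1$ via Lemma \ref{normvort}), and it names the right reference, but neither of the two routes you sketch is actually carried out, and each stalls exactly where you say it does. The paper's resolution is different from both: instead of lifting the \emph{actual} boundary data $\LL^{-1}u_1(t,x_1)$, one compares $\omega$ with solutions $\tilde\omega_\pm$ of the same drift--diffusion equation whose boundary data is the \emph{constant} $\pm\Lambda$, where $\Lambda:=\LL^{-1}\|u_1\|_{L^\infty(\{x_2=0,1\}\times(0,T))}$ is the sup over the whole time interval. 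The maximum principle applied to $\omega-\tilde\omega_\pm$ gives $\tilde\omega_-\le\omega\le\tilde\omega_+$, and since the barrier is constant in space and time, the shift $\hat\omega=\tilde\omega_\pm\mp\Lambda$ has homogeneous Dirichlet data and \emph{zero} time derivative coming from the shift; the $L^p$ energy estimate with Poincar\'e then yields $\|\hat\omega(t)\|_{L^p}\le C(\|\omega_0\|_{L^p}e^{-t/C}+\Lambda+\Ra)$ with no pressure and no $\partial_t u_1$ ever appearing. This is precisely the step your lift approach cannot reproduce: a time-dependent harmonic lift $\phi$ forces you to control $\partial_t u_1|_{\partial\Omega}$, which no available a priori bound gives, and your fallback (testing against $|\omega|^{p-2}\omega$ and substituting Lemma \ref{normvort}) reinstates the unsigned pressure boundary term, which for $p=2$ is the enstrophy balance of Proposition \ref{enstophbal} --- an identity that by itself does \emph{not} give uniform-in-time control (in the paper the logic runs the other way: Corollary \ref{avenstbal} is deduced \emph{from} Lemma \ref{unifenstbdd}).

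There is a second missing ingredient, independent of the boundary issue: your claim that the lift contributions can be ``absorbed using the uniform energy bound of Lemma \ref{bndu}'' is circular as stated. The boundary datum is a trace of $u_1$, not controlled by $\|u\|_{L^2}$ alone (and your chain $\LL^{-1}\|u_1\|_{H^{1/2}(\partial\Omega)}\lesssim\LL^{-1}\|u\|_{H^1}=\LL^{-1}\|\omega\|_{L^2}$ both misquotes Lemma \ref{vortidents} and bounds the forcing by the very quantity you are trying to estimate). The paper closes this loop self-consistently: $\Lambda\le \LL^{-1}\|u\|_{L^\infty}\lesssim \LL^{-1}\|u\|_{L^2}^{\theta}\|\omega\|_{L^p}^{1-\theta}+\LL^{-1}\|u\|_{L^2}$ (using $\|\nabla u\|_{L^p}\lesssim\|\omega\|_{L^p}$ from Lemma \ref{ineqlem}), then Young's inequality with a small parameter $\ve$ and Lemma \ref{bndu} (here is where $\LL\ge1$ enters) give $\Lambda\le C_\ve(\LL^{-1}\|u_0\|_{L^2}+\Ra)+\ve\,\|\omega\|_{L^\infty([0,T];L^p)}$, and the $\ve$-term is absorbed into the left-hand side of the bound for $\|\omega\|_{L^\infty([0,T];L^p)}$. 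Without the constant-barrier comparison and this absorption step, the proposal does not yield \eqref{vorticity:ub}.
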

\begin{proof}
Since $\Omega$ is bounded it suffices to prove \eqref{vorticity:ub} for $p\in (2, \infty)$. To this end, we follow a strategy used in \cite{LNP05}.  For arbitrary $T>0$ set
\[
\Lambda :=\frac{1}{\LL} \| u_1 \|_{L^\infty(\{x_2=0,1\}\times (0,T))}
\]
and consider the problems
\begin{align*}
\frac{1}{\Pra}(\partial_t \tilde\omega_\pm +u\cdot \nabla \tilde{\omega}_\pm)-\Delta \tilde\omega_\pm&=\Ra \partial_1 T \qquad \text{in}\ \  \Omega\,,\\
\tilde\omega_\pm|_{t=0}&=\pm|\omega_0|  \qquad \ \ \text{in}\ \  \Omega, \\
\tilde\omega_\pm&=\pm\Lambda \qquad \ \ \ \  \text{on}\ \  \{x_2=0\}\cup \{x_2=1\}.
\end{align*} 
Now let $\omega'_\pm:= \omega- \tilde{\omega}_\pm$. This quantity satisfies
\begin{align*} 
\frac{1}{\Pra}(\partial_t \omega'_\pm +u\cdot \nabla {\omega}'_\pm)-\Delta \omega'_\pm&= 0\qquad\qquad\ \  \quad \quad \  \text{in}\ \  \Omega\,,\\
\omega'_\pm|_{t=0}&=\omega_0\mp|\omega_0|  \ \ \qquad \ \ \ \text{in}\ \  \Omega \\
-\omega'_\pm&=\frac{1}{\LL} u_1\pm\Lambda \qquad \ \  \ \ \text{on}\ \  \{x_2=0\},\\ 
\omega'_\pm&=\frac{1}{\LL} u_1\mp \Lambda \qquad \ \ \ \  \text{on}\ \   \{x_2=1\}.
\end{align*} 
By the maximum principle, we have  $\omega'_+ \leq 0$ and  $\omega'_-\geq 0$ a.e. $\Omega\times [0,T)$. Thus we obtain $ \tilde\omega_-\leq \omega\leq \tilde\omega_+$  and hence
\be\label{minmaxeqn}
 |\omega|\leq \max \{ |\tilde\omega_+|,|\tilde\omega_-|\} \qquad  \text{a.e.} \qquad \Omega\times [0,T).
\ee
We now bound $\tilde\omega_\pm$ in $L^p$.  We focus on $\tilde\omega= \tilde\omega_+$, the other is similar. Let $\hat{\omega} :=\tilde{\omega} -\Lambda$.  This solves
\begin{align*} 
\frac{1}{\Pra}(\partial_t \hat{\omega}+u\cdot \nabla \hat{\omega})-\Delta  \hat{\omega}&=\Ra \partial_1 T \qquad \quad \quad \  \text{in}\ \  \Omega\,,\\ 
\hat\omega|_{t=0}&=|\omega_0|-\Lambda  \ \ \qquad \ \ \text{in}\ \  \Omega, \\
\omega&=0 \qquad\qquad \ \  \ \ \quad  \text{on}\ \  \{x_2=0\}\cup \{x_2=1\}.
\end{align*} 
We now perform $L^p$ estimates; multiplying by $\hat{\omega}|\hat{\omega}|^{p-2}$ where $p> 2$ we find 
\[
\frac{1}{p} \frac{\rmd}{\rmd t} \|\hat\omega\|_{L^p}^p + (p-1) \int_\Omega |\nabla\hat \omega|^2 |\hat\omega|^{p-2} \rmd x= -\Ra \int_\Omega \partial_1( \hat{\omega} |\hat\omega|^{p-2}) T \rmd x.
\]
We bound using Cauchy-Schwarz and Young's inequality
\begin{align*} \nonumber
\Ra \left|\int_\Omega \partial_1( \hat{\omega} |\hat\omega|^{p-2}) T \rmd x\right| &\leq (p-1) \left(\int_\Omega |\nabla\hat \omega|^2 |\hat\omega|^{p-2} \rmd x\right)^{\frac{1}{2}} \left(\Ra^2\int_\Omega |\hat\omega|^{p-2} T^2 \rmd x\right)^{\frac{1}{2}}\\
&\leq \frac{p-1}{2}\int_\Omega |\nabla\hat \omega|^2 |\hat\omega|^{p-2} \rmd x + \frac{p-1}{2}|\Omega|^{\frac{2}{p}}\Ra^2 \|\hat\omega\|_{L^p}^{p-2},
\end{align*} 
where we used that $\|T\|_{L^\infty}=1$. Thus we obtain
\[
\frac{1}{p} \frac{\rmd}{\rmd t} \|\hat\omega\|_{L^p}^p + \frac{p-1}{2} \int_\Omega |\nabla\hat \omega|^2 |\hat\omega|^{p-2} \rmd x\le \frac{p-1}{2}|\Omega|^{\frac{2}{p}}\Ra^2 \|\hat\omega\|_{L^p}^{p-2}.
\]
Finally, since $\hat\omega$ vanishes on the boundary, we have the Poincar\'e inequality
\[
 \int_\Omega |\nabla\hat \omega|^2 |\hat\omega|^{p-2} \rmd x=\frac{4}{p^2}\| \nabla |\omega|^{\frac{p}{2}}\|_{L^2}^2\ge \frac{4}{p^2C_p^2} \|  |\omega|^{\frac{p}{2}}\|_{L^2}^2= \frac{4}{p^2C_p^2} \| \omega\|_{L^p}^p
\]
Thus we obtain (dividing through by $ \|\hat\omega\|_{L^p}^{p-2}$) the inequality 
\[
\frac{\rmd}{\rmd t} \|\hat\omega\|_{L^p}^2\leq - \frac{p-1}{2} \frac{4}{p^2C_p^2} \|\hat\omega\|_{L^p}^2 + \ \frac{p-1}{2}|\Omega|^{\frac{2}{p}}\Ra^2 .
\]
It follows that for all $t\geq 0$
\be\label{vortbound}
 \|\hat\omega(t)\|_{L^p} \leq \|\hat\omega_0\|_{L^p}e^{-t\frac{(p-1)}{p^2C_p^2} } + \frac{pC_p}{2}|\Omega|^{\frac{1}{p}}\Ra\le C\Big(\|\omega_0\|_{L^p}e^{-\frac{t}{C}} +\Lambda +\Ra\Big),\quad C=C(p, \Gamma).
\ee
Given this bound, we estimate $\Lambda$ using interpolation as follows
\begin{align*}
\Lambda \leq \frac{1}{\LL} \|u\|_{L^\infty(\Omega\times (0, T))}& \leq  \frac{C}{\LL} \|u\|_{L^\infty([0, T]; L^2_x)}^\theta  \|\na u\|_{L^\infty([0, T]; L^p_x)}^{1-\theta}+ \frac{C}{\LL}\| u\|_{L^\infty([0, T]; L^2_x)}\\
&  \leq   \frac{C}{\LL} \|u\|_{L^\infty([0, T]; L^2_x)}^\theta  \|\omega\|_{L^\infty([0, T]; L^p_x)}^{1-\theta} + \frac{C}{\LL}\| u\|_{L^\infty([0, T]; L^2_x)}\\
&\leq  C_\ve \left(\frac{1}{\LL^{\frac{1}{\theta}}}+\frac{1}{\LL}\right)\|u\|_{L^\infty([0, T]; L^2_x)} +  \ve \|\omega\|_{L^\infty([0, T]; L^p_x)},
\end{align*}
where $\theta= \frac{p-2}{2p-2}\in (0, 1)$, $\ve>0$ is arbitrary and, appealing to Lemma \ref{ineqlem}, we used $\|\nabla u\|_{L^p}\lesssim \| \omega\|_{L^p}$.  By virtue of Lemma \ref{bndu}, for $\LL\ge 1$ we obtain
\[
\Lambda\le C_\ve\left(\frac{1}{\LL}\| u_0\|_{L^2}+\Ra\right)+ \ve \|\omega\|_{L^\infty([0, T]; L^p_x)}.
\]
 In view of this, \eqref{minmaxeqn} and \eqref{vortbound}, choosing $\varepsilon$ small enough gives
\[
\|\omega\|_{L^\infty([0, T]; L^p)} \le C\Big(\|\omega_0\|_{L^p} +\frac{1}{\LL}\| u_0\|_{L^2} +\Ra\Big),
\]
where $C$ is independent of $T$. Since $T>0$ is arbitrary, this completes the proof.
\end{proof}

An immediate consequence of the enstrophy balance \eqref{enstrophyeq} and the uniform vorticity bound \eqref{vorticity:ub} is the following global balance

\begin{cor}[Average Enstrophy Balance]\label{avenstbal}
We have the balance for long-time averages
\be\label{aventrophy}
\langle |\nabla \omega|^2\rangle  =  \frac{1}{\LL}\left( \langle p\partial_1 u_1 \big|_{x_2=1} \rangle + \langle p\partial_1 u_1 \big|_{x_2=0} \rangle \right)+\Ra \langle \omega \partial_1 T\rangle.
\ee
\end{cor}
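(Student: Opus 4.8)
The plan is to time-average the pointwise enstrophy balance \eqref{enstrophyeq}, in complete parallel with the derivation of Corollary \ref{aveenbal} from the energy balance. Concretely, I would integrate \eqref{enstrophyeq} over $t\in[0,T]$ and divide by $T$. The dissipation term yields $\tfrac1T\int_0^T\|\nabla\omega\|_{L^2}^2\,\rmd t$; the two total-time-derivative terms collapse to their endpoint values, contributing $\tfrac{1}{2\Pra T}\big(\|\omega_0\|_{L^2}^2-\|\omega(T)\|_{L^2}^2\big)$ together with $\tfrac{1}{2\LL\Pra T}$ times the difference, between $t=0$ and $t=T$, of $\|u_1\|_{L^2(\{x_2=1\})}^2+\|u_1\|_{L^2(\{x_2=0\})}^2$; and the right-hand side of \eqref{enstrophyeq} becomes its Ces\`aro time-average. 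Since $\langle\cdot\rangle$ also carries the spatial factor $\tfrac1\Gamma\int_\Omega$, passing to $\limsup_{T\to\infty}$ produces exactly \eqref{aventrophy}, provided the two endpoint terms tend to $0$.

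Because $\Pra$ and $\LL$ are fixed, it suffices to bound $\|\omega(t)\|_{L^2(\Omega)}$ and the boundary trace $\|u_1(t)\|_{L^2(\{x_2=0\}\cup\{x_2=1\})}$ uniformly in $t>0$. The former is precisely Lemma \ref{unifenstbdd} with $p=2$, giving $\|\omega(t)\|_{L^2}\le C(\|\omega_0\|_{L^2}+\LL^{-1}\|u_0\|_{L^2}+\Ra)$. For the latter I would invoke the Sobolev trace inequality $\|u_1(t)\|_{L^2(\{x_2=0\}\cup\{x_2=1\})}\lesssim\|u_1(t)\|_{H^1(\Omega)}\lesssim\|u(t)\|_{L^2}+\|\nabla u(t)\|_{L^2}$, and then use $\|\nabla u\|_{L^2}=\|\omega\|_{L^2}$ (Lemma \ref{vortidents}) together with the uniform energy bound of Lemma \ref{bndu}; this again reduces the trace bound to the vorticity bound above. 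Hence both endpoint terms are $O(1/T)$ and vanish as $T\to\infty$, completing the argument.

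The step I expect to be the real point --- and the reason Corollary \ref{avenstbal} is stated as a consequence of the $L^p$ vorticity bound \eqref{vorticity:ub} and not merely of the enstrophy balance --- is the uniform-in-time control of the boundary trace of $u_1$: here this trace enters through a \emph{time derivative}, so an estimate growing with $t$ would not be absorbed by the $1/T$ prefactor, in contrast to the situation in the energy balance where the trace appears directly in the dissipation. The remaining ingredients (the trace inequality, $\|\nabla u\|_{L^2}=\|\omega\|_{L^2}$, and the fact that, since the endpoint terms converge, the $\limsup$ on the left matches the time average of the right-hand side, exactly as in Corollary \ref{aveenbal}) are routine.
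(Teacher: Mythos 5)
Your argument is correct and is exactly the route the paper intends: Corollary \ref{avenstbal} is stated as an immediate consequence of time-averaging \eqref{enstrophyeq} and killing the endpoint terms via the uniform $L^2$ vorticity bound of Lemma \ref{unifenstbdd}. Your extra observation about controlling the boundary trace of $u_1$ (via the trace inequality, Lemma \ref{bndu} and $\|\nabla u\|_{L^2}=\|\omega\|_{L^2}$) is precisely the routine detail the paper leaves implicit, so nothing is missing.
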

\section{Proof of Theorem \ref{theorem}}

The theorem follows by an application of the background field method \cite{DC96}. This method is based on adopting the ansatz 
\be\label{tempdec}
T(x_1,x_2,t)=:\tau(x_2)+\theta(x_1,x_2,t).
\ee
We choose the ``background" profile $\tau:[0,1]\to [0,1]$ to be the continuous function given by
\begin{align}\label{taudef}
\tau(z) :=1-\frac{1}{2\delta}\begin{cases} z & z\in [0,\delta]\\
 \delta & z\in (\delta, 1-\delta)\\
z +2\delta-1 & z\in [1-\delta, 1]
\end{cases},
\end{align}
for some $\delta>0$ to be chosen later in the proof.
Note that
\begin{align}\label{dtaudef}
\tau'(z) =-\frac{1}{2\delta}\begin{cases} 1 & z\in [0,\delta)\\
 0 & z\in (\delta, 1-\delta)\\
1 & z\in (1-\delta, 1]
\end{cases}.
\end{align}
Note that $ \|\tau'\|_{L^2([0,1])}^2  = \frac{1}{2\delta}$. Note that $\theta$ vanishes at the boundaries $x_2=\{0,1\}$.

\begin{prop} With $\theta$ and $\tau$ defined by \eqref{tempdec} and \eqref{taudef}, the following identity holds
\begin{align}\label{eq:Nu-delta}
\Nu-  \frac{1}{2\delta} =-  \langle|\nabla \theta|^2\rangle   -2\langle \tau' u_2\theta\rangle.
\end{align}
\end{prop}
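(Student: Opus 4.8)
The plan is to insert the decomposition \eqref{tempdec} into the temperature equation \eqref{e3}, test against $\theta$, and then take the long-time average, comparing the outcome with the flux representation of $\Nu$ from Proposition \ref{nussident}. Since $\tau=\tau(x_2)$ is time-independent, pairing \eqref{e3} with $\theta$ in $L^2(\Omega)$ and writing $\partial_tT=\partial_t\theta$ and $u\cdot\nabla T=u\cdot\nabla\theta+\tau'u_2$ yields, after one integration by parts,
\begin{equation*}
\frac12\frac{\rmd}{\rmd t}\|\theta\|_{L^2}^2+\int_\Omega\tau'u_2\theta\,\rmd x=\int_\Omega\theta\,\Delta T\,\rmd x=-\int_\Omega\nabla T\cdot\nabla\theta\,\rmd x=-\|\nabla\theta\|_{L^2}^2-\int_\Omega\tau'\partial_2\theta\,\rmd x .
\end{equation*}
Here the transport term drops out by incompressibility \eqref{e2}, the non-penetration condition \eqref{e6} and $\Gamma$-periodicity in $x_1$; the boundary contributions of the two integrations by parts vanish because $\theta|_{x_2=0}=\theta|_{x_2=1}=0$ (from \eqref{e7}--\eqref{e8} together with $\tau(0)=1$, $\tau(1)=0$) and everything is periodic in $x_1$. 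It is convenient to keep $\nabla T\cdot\nabla\theta$ rather than integrate by parts a second time onto $\tau$, since $\tau$ is only Lipschitz and $\tau''$ is merely a measure; only $\tau'\in L^\infty$ is used.

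I then average in time. By the maximum principle \eqref{max-prin} and $0\le\tau\le1$ one has $\|\theta(t)\|_{L^2}\le 2\Gamma$ for all $t$, so the time-derivative term averages to zero and the identity becomes
\begin{equation*}
\langle\tau'\partial_2\theta\rangle=-\langle|\nabla\theta|^2\rangle-\langle\tau'u_2\theta\rangle .
\end{equation*}

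It remains to evaluate $\langle\tau'\partial_2\theta\rangle$ independently. From $\partial_2\theta=\partial_2T-\tau'$ and $\|\tau'\|_{L^2([0,1])}^2=\tfrac1{2\delta}$ one gets $\langle\tau'\partial_2\theta\rangle=\langle\tau'\partial_2T\rangle-\tfrac1{2\delta}$. The proof of Proposition \ref{nussident} shows that the long-time, $x_1$-averaged vertical flux is constant in $x_2$ with $\langle u_2T-\partial_2T\rangle_{x_1}\equiv\Nu$ on $[0,1]$; integrating this identity against $\tau'=\tau'(x_2)$ over $x_2\in[0,1]$ and using $\int_0^1\tau'=\tau(1)-\tau(0)=-1$ gives $\langle\tau'\partial_2T\rangle=\langle\tau'u_2T\rangle+\Nu$. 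Moreover, integrating \eqref{e2} in $x_1$ and using $u_2|_{x_2=0}=0$ forces $\int_0^\Gamma u_2(\cdot,x_2,t)\,\rmd x_1=0$ for every $x_2$ and $t$, whence $\langle\tau'\tau u_2\rangle=0$ and therefore $\langle\tau'u_2T\rangle=\langle\tau'u_2\theta\rangle$. Combining, $\langle\tau'\partial_2\theta\rangle=\langle\tau'u_2\theta\rangle+\Nu-\tfrac1{2\delta}$; equating this with the averaged energy identity above and rearranging yields exactly \eqref{eq:Nu-delta}. The computation is essentially bookkeeping, so there is no serious obstacle; the points that require (mild) care are the low regularity of $\tau$ — dealt with by never moving two derivatives onto $\tau$ — and the legitimacy of discarding the time-derivative term after averaging, which rests on the uniform $L^2$ bound for $\theta$ coming from the maximum principle.
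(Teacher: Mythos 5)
Your proof is correct, and its core coincides with the paper's: both arguments hinge on testing the equation satisfied by the fluctuation $\theta$ with $\theta$ itself and time-averaging to obtain \eqref{EE-theta}, $\langle \tau'\partial_2\theta\rangle=-\langle|\nabla\theta|^2\rangle-\langle\tau' u_2\theta\rangle$, the uniform $L^2$ bound on $\theta$ killing the $\frac{\rmd}{\rmd t}$ term. Where you diverge is in the second ingredient: the paper starts from the dissipation identity $\Nu=\langle|\nabla T|^2\rangle$ of Proposition \ref{nussident} and expands $|\nabla T|^2=|\nabla\theta|^2+|\tau'|^2+2\tau'\partial_2\theta$, whereas you instead pair the constancy in $x_2$ of the horizontally/time-averaged flux $\langle u_2T-\partial_2T\rangle_{x_1}\equiv\Nu$ (established inside the proof of Proposition \ref{nussident}) against $\tau'$, using $\int_0^1\tau'=-1$ and the fact that $\int_0^\Gamma u_2\,\rmd x_1=0$ to replace $\langle\tau'u_2T\rangle$ by $\langle\tau'u_2\theta\rangle$; combined with \eqref{EE-theta} this gives \eqref{eq:Nu-delta}. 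Your route has the small virtue of never putting two derivatives on $\tau$: by keeping $\Delta T$ and integrating by parts only once you avoid the measure $\tau''$ entirely, while the paper's version tests the $\theta$-equation (which contains $\tau''$) and appeals to smooth approximation of the profile to justify it; on the other hand the paper's expansion of $\langle|\nabla T|^2\rangle$ is slightly more economical in bookkeeping and generalizes verbatim to other background profiles without invoking the pointwise-in-$x_2$ flux constancy. (Minor quibble: $\|\theta(t)\|_{L^2}\le 2\sqrt{\Gamma}$ rather than $2\Gamma$, which of course does not affect the argument.)
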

\begin{proof}
According to Proposition \ref{nussident}, the decomposition \eqref{tempdec} and the profile \eqref{dtaudef}, we have
\begin{align}
\Nu&= \langle|\nabla \theta|^2\rangle + \|\tau'\|_{L^2([0,1])}^2 + 2\langle \tau' \partial_2 \theta \rangle.
\end{align}
Inserting now the ansatz \eqref{tempdec} into \eqref{e3}, we find the fluctuation
$\theta$ satisfies
\begin{align} \label{thetaeqn}
\partial_t\theta+u_2\tau'+u\cdot\nabla \theta-\Delta\theta-\tau''&=0 \ \qquad \ \ \text{in}\ \  \Omega\,,\\  \label{te2}
\theta&=0 \qquad \ \  \text{on}\ \  \{x_2=0\}\cup \{x_2=1\}.
\end{align} 
Integrating \eqref{thetaeqn} against $\theta$ and taking the long-time average (using the fact that $\theta$, like $T$,  is uniformly bounded in time), we obtain
\begin{equation}\label{EE-theta}
 \langle \tau'\partial_2\theta\rangle  =-  \langle|\nabla \theta|^2\rangle -\langle \tau' u_2\theta\rangle.
\end{equation}
This argument can be made rigorous by smooth approximation of the profile $\tau$.
Inserting this equality above yields the claimed identity.
\end{proof}
Similarly to the bound of Doering--Constantin for the no-slip boundary condition \cite{DC96}, we have
\begin{lemma}\label{Lem-Nu12}
For any $\LL> 0$, we have $\Nu \lesssim \Ra^{\frac{1}{2}}$.
\end{lemma}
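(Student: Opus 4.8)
The plan is to run the background-field argument with the profile $\tau$ of \eqref{taudef}, starting from the exact identity \eqref{eq:Nu-delta}, $\Nu-\frac{1}{2\delta}=-\langle|\nabla\theta|^2\rangle-2\langle\tau'u_2\theta\rangle$, and to absorb the sign-indefinite term $2\langle\tau'u_2\theta\rangle$ into $\langle|\nabla\theta|^2\rangle$ plus a small multiple of $\langle|\nabla u|^2\rangle$, the latter then being controlled by the average energy balance. Recall that $\tau'$ is supported in the two boundary layers $\{x_2\in[0,\delta]\}\cup\{x_2\in[1-\delta,1]\}$, on which $|\tau'|=\frac{1}{2\delta}$.

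First I would bound the boundary-layer term. Since both $u_2$ and $\theta$ vanish on $\{x_2=0\}\cup\{x_2=1\}$, the fundamental theorem of calculus and Cauchy--Schwarz give, for every $x_1$ and $x_2\in[0,\delta]$,
\[
|u_2(x_1,x_2)|\le x_2^{1/2}\|\partial_2 u_2(x_1,\cdot)\|_{L^2(0,1)},\qquad |\theta(x_1,x_2)|\le x_2^{1/2}\|\partial_2\theta(x_1,\cdot)\|_{L^2(0,1)},
\]
and symmetrically on $[1-\delta,1]$. Multiplying these, integrating the product in $x_2$ over each strip (which produces a factor $\int_0^\delta x_2\,\rmd x_2=\delta^2/2$), summing the two layers, multiplying by $\frac{1}{2\delta}$, and finally integrating in $x_1$ with Cauchy--Schwarz, one obtains
\[
\Big|\int_0^\Gamma\!\!\int_0^1\tau'u_2\theta\,\rmd x_2\,\rmd x_1\Big|\le\frac{\delta}{2}\,\|\partial_2 u_2\|_{L^2(\Omega)}\|\partial_2\theta\|_{L^2(\Omega)}\le\frac{\delta}{2}\,\|\nabla u\|_{L^2(\Omega)}\|\nabla\theta\|_{L^2(\Omega)}.
\]
Taking the long-time average, then Cauchy--Schwarz in time and Young's inequality, yields $2\langle\tau'u_2\theta\rangle\le\delta\,\langle|\nabla u|^2\rangle^{1/2}\langle|\nabla\theta|^2\rangle^{1/2}\le\langle|\nabla\theta|^2\rangle+\frac{\delta^2}{4}\langle|\nabla u|^2\rangle$.

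Substituting into \eqref{eq:Nu-delta} and discarding $-\langle|\nabla\theta|^2\rangle\le0$ gives $\Nu-\frac{1}{2\delta}\le\frac{\delta^2}{4}\langle|\nabla u|^2\rangle$. By the average energy balance (Corollary \ref{aveenbal}), after dropping the nonnegative slip boundary terms, $\langle|\nabla u|^2\rangle\le\Ra(\Nu-1)\le\Ra\,\Nu$, so $\Nu-\frac{1}{2\delta}\le\frac{\delta^2\Ra}{4}\,\Nu$. Choosing $\delta=\sqrt{2/\Ra}$ — which is $<\tfrac12$, hence an admissible boundary-layer width, once $\Ra$ exceeds an absolute constant; smaller $\Ra$ being irrelevant for a bound of the form $\Nu\lesssim\Ra^{1/2}$ — makes $\frac{\delta^2\Ra}{4}=\tfrac12$, whence $\tfrac12\Nu\le\frac{1}{2\delta}$, i.e. $\Nu\le\sqrt{\Ra/2}\lesssim\Ra^{1/2}$. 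The argument is dimension-independent: in dimension $d\ge2$ one replaces $u_2$ by $u_d$ and $\partial_2$ by $\partial_d$ and uses only non-penetration of the velocity together with the $d$-dimensional energy balance. There is no genuine obstacle here; the one point that needs care is the boundary-layer estimate, where it is essential that both $u_2$ and $\theta$ vanish on the plates, each supplying a factor $x_2^{1/2}$, so that their product beats the $\frac{1}{2\delta}$ weight over a strip of width $\delta$.
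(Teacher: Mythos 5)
Your argument is correct and is essentially the proof in the paper: the same background profile, the identity \eqref{eq:Nu-delta}, the boundary-layer estimate exploiting that $u_2$ and $\theta$ vanish at the plates, and the average energy balance of Corollary \ref{aveenbal}, with the balancing choice $\delta\sim\Ra^{-1/2}$. The only (cosmetic) differences are that you absorb the cross term into $-\langle|\nabla\theta|^2\rangle$ by Young's inequality instead of invoking Proposition \ref{nussident}, and your $\delta$ depends on $\Ra$ alone rather than on $\Nu$; both variants yield the same bound $\Nu\lesssim\Ra^{1/2}$.
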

\begin{proof}
 Equation \eqref{eq:Nu-delta} implies $\Nu\leq  \frac{1}{2\delta}-2\langle \tau' u_2\theta\rangle$. Since $\tau'=\frac{1}{2\delta}$ on its support $(0, \delta)\cup (1, 1-\delta)$ and $\theta$ and $u_2$ vanish on $x_2=0,\ 1$, we have
 \[
 |\theta(x_1, x_2)|\le \sqrt{\delta}\|\p_2 \theta(x_1, \cdot)\|_{L^2(0, 1)}\quad\forall x_2\in (0, \delta)\cup (1, 1-\delta)
 \]
 and similarly  for $u_2$. Consequently,
 \[
 \frac{1}{\Gamma}\int_0^\Gamma\int_0^1 2 |\tau' u_2\theta| \rmd x_2 \rmd x_1\le \delta \frac{1}{\Gamma}\| \p_2 u_2\|_{L^2(\Omega)}\| \p_2\theta\|_{L^2(\Omega)}.
\]
Integrating in time and applying the Cauchy-Schwarz inequality gives
\be\label{est:utt:1}
|\langle -2 \tau' u_2\theta\rangle|
\le 2 \delta\langle |\partial_2u_2|^2\rangle^\frac12\langle |\partial_2\theta|^2\rangle^\frac12.
\ee
Appealing to Proposition \ref{nussident} and Corollary \ref{aveenbal} we deduce 
\begin{equation}
\Nu\leq\frac{1}{2\delta}+ 2\delta (\Nu)^{\frac 12}((\Nu-1)\Ra)^{\frac 12}\lesssim\frac{1}{2\delta}+2\delta \Nu\Ra^{\frac 12} \,.
\end{equation} 
Choosing $\delta\sim \Nu^{-\frac 12}\Ra^{-\frac 14}\,$ by balancing the contributions of each term
yields
$\Nu\lesssim \Ra^{\frac 12}\,$.
\end{proof}
To improve the bound, we follow \cite{WD11} by using the energy and enstrophy balances
\begin{align*}
\textbf{(a)}&:=  \langle |\nabla \omega|^2\rangle  -    \frac{1}{\LL}\left( \langle p\partial_1 u_1 \big|_{x_2=1} \rangle + \langle p\partial_1 u_1 \big|_{x_2=0} \rangle \right)-\Ra \langle \omega \partial_1 T\rangle,\\
\textbf{(b)}&:=  \langle |\nabla u|^2\rangle+ \frac{1}{\LL}\left( \langle u_1^2\big|_{x_2=1} \rangle + \langle u_1^2\big|_{x_2=0} \rangle \right)  -  \Ra( \Nu-1).
\end{align*}
Note that $\textbf{(a)}=\textbf{(b)}=0$ by Corollary \ref{aveenbal} and \ref{avenstbal}.  Thus in view of \eqref{eq:Nu-delta} we have 
\be
\Nu  = \frac{1}{2\delta}-  \langle|\nabla \theta|^2\rangle   -2\langle \tau' u_2\theta\rangle -\frac{b}{\Ra} \textbf{(b)}- a\textbf{(a)},
\ee
for all $b\in [0,1)$ and $a\in \mathbb{R}$. 
\begin{prop}\label{qdef}
Let $\delta>0$, $b\in [0,1)$, $a>0$ and $M>0$.  Then the following identity holds
\begin{align} \label{Nubdd}
(1-b)\Nu+b &= \frac{1}{2\delta}+  M \Ra^2 - \mathcal{Q}[\theta, u, \tau],
\end{align} 
where $\mathcal{Q}[\theta, u, \tau]$ is defined by
\begin{align} \nonumber
\mathcal{Q}[\theta, u, \tau]&:=  M \Ra^2+   \langle|\partial_1 \theta|^2\rangle+ \langle|\partial_2 \theta|^2\rangle    +2\langle \tau' u_2\theta\rangle \\ \nonumber
&\qquad+ \frac{b}{\Ra} \langle |\omega|^2\rangle  + \frac{b}{\Ra\LL}\left( \langle u_1^2\big|_{x_2=1} \rangle + \langle u_1^2\big|_{x_2=0} \rangle \right)\\
&\qquad \quad  + a \langle |\nabla \omega|^2\rangle  -  \frac{a}{\LL} \left( \langle p\partial_1 u_1 \big|_{x_2=1} \rangle + \langle p\partial_1 u_1 \big|_{x_2=0} \rangle \right)-a\Ra \langle \omega \partial_1 \theta \rangle.
\end{align}
\end{prop}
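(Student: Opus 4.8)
The plan is to prove \eqref{Nubdd} by a purely algebraic manipulation: one simply feeds the two vanishing long-time-average balances into the identity \eqref{eq:Nu-delta} and regroups. Concretely, I would start from \eqref{eq:Nu-delta} rewritten as
\[
\Nu=\frac{1}{2\delta}-\langle|\nabla\theta|^2\rangle-2\langle\tau' u_2\theta\rangle,
\]
and add to the right-hand side the two quantities $-\tfrac{b}{\Ra}\textbf{(b)}$ and $-a\,\textbf{(a)}$, which are identically zero by Corollary \ref{aveenbal} and Corollary \ref{avenstbal}; this is legitimate for every $b\in[0,1)$ and every real $a$ (the restriction $a>0$, like $b\in[0,1)$, matters only for the later optimization of $\mathcal{Q}$, not for deriving the identity). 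These balances themselves rest on the uniform-in-time bounds of Lemma \ref{bndu} and Lemma \ref{unifenstbdd}, which allow passage to the long-time average in the energy and enstrophy identities, so nothing new is needed here.

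Next I would expand the two inserted terms. From the definition of $\textbf{(b)}$,
\[
-\frac{b}{\Ra}\textbf{(b)}=b\Nu-b-\frac{b}{\Ra}\langle|\nabla u|^2\rangle-\frac{b}{\Ra\LL}\Big(\langle u_1^2\big|_{x_2=1}\rangle+\langle u_1^2\big|_{x_2=0}\rangle\Big),
\]
and since $d=2$ one has $\|\nabla u\|_{L^2}=\|\omega\|_{L^2}$ (cf.\ Lemma \ref{vortidents} and \eqref{elliptic:u2:2}), so $\langle|\nabla u|^2\rangle=\langle|\omega|^2\rangle$ and this matches the enstrophy contribution in $\mathcal{Q}$. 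From the definition of $\textbf{(a)}$,
\[
-a\,\textbf{(a)}=-a\langle|\nabla\omega|^2\rangle+\frac{a}{\LL}\Big(\langle p\partial_1 u_1\big|_{x_2=1}\rangle+\langle p\partial_1 u_1\big|_{x_2=0}\rangle\Big)+a\Ra\langle\omega\partial_1 T\rangle,
\]
and because $\tau=\tau(x_2)$ the decomposition \eqref{tempdec} gives $\partial_1 T=\partial_1\theta$, hence $\langle\omega\partial_1 T\rangle=\langle\omega\partial_1\theta\rangle$.

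Finally I would collect terms: moving $b\Nu$ to the left-hand side produces $(1-b)\Nu+b$; on the right, adding and subtracting $M\Ra^2$ and splitting $\langle|\nabla\theta|^2\rangle=\langle|\partial_1\theta|^2\rangle+\langle|\partial_2\theta|^2\rangle$, one finds that everything beyond $\tfrac{1}{2\delta}+M\Ra^2$ is exactly the negative of the expression defining $\mathcal{Q}[\theta,u,\tau]$, which is \eqref{Nubdd}. There is no genuine obstacle in this proof — it is bookkeeping — so the only points to be careful about are the three structural inputs just used: that the average energy and enstrophy balances hold (already established via the uniform bounds), the two-dimensional identity $\langle|\nabla u|^2\rangle=\langle|\omega|^2\rangle$, and the replacement $\langle\omega\partial_1 T\rangle=\langle\omega\partial_1\theta\rangle$. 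The term $M\Ra^2$ is introduced purely as a free nonnegative summand to be exploited when establishing positivity of $\mathcal{Q}$ in the sequel, so no constraint on $M>0$ is needed at this stage.
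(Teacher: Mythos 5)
Your proposal is correct and is essentially the paper's own derivation: one adds the vanishing averaged balances $\textbf{(a)}$ and $\textbf{(b)}$ to \eqref{eq:Nu-delta}, uses $\langle|\nabla u|^2\rangle=\langle|\omega|^2\rangle$ (Lemma \ref{vortidents}) and $\partial_1 T=\partial_1\theta$, and regroups with the added-and-subtracted $M\Ra^2$. The bookkeeping matches the definition of $\mathcal{Q}$ exactly, so nothing further is needed.
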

The strategy is to show that $\mathcal{Q}$ is non-negative for an appropriate choice of $\delta:=\delta(\Ra)$.  Then \eqref{Nubdd} will yield the desired bound on the Nusselt number. This requires bounds for the pressure and for $ 2\langle \tau' u_2\theta\rangle$, where the former is handled by virtue of \eqref{bound:p} and the latter requires a bound different from \eqref{est:utt:1}. The main result is
\begin{prop}
There exists a universal constant $L_0>0$ such that for all $\LL\ge L_0$  and $\Pr$ such that $\LL ^2\Pra^2\geq \Ra^{\frac{3}{2}}$, we have  
\be\label{mainbound:Nu}
\Nu \lesssim \Ra^{\frac{5}{12}}+\LL^{-2}\Ra^{\frac 12},\qquad\forall \\Ra>1.
\ee Here, the implicit constant depends only on $\Gamma$, $\| T_0\|_{L^\infty}$ and $\|u_0\|_{W^{1, r}}$ for any fixed $r\in (2, \infty)$.
\end{prop}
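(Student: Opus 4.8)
The plan is to deduce \eqref{mainbound:Nu} from the background-field identity \eqref{Nubdd} of Proposition \ref{qdef}: fixing $b=\tfrac12$, it suffices to choose the free parameters $a,M,\delta>0$ (as powers of $\Ra$ and $\LL$, the target being $\delta\sim \Ra^{-5/12}$, $a\sim \Ra^{-3/2}$ and $M\sim \LL^{-2}\Ra^{-3/2}$) so that the quadratic functional $\mathcal{Q}[\theta,u,\tau]$ is bounded below by a fixed constant; \eqref{Nubdd} then gives $\Nu\lesssim \tfrac1\delta+M\Ra^2\lesssim \Ra^{5/12}+\LL^{-2}\Ra^{1/2}$. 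A first simplification: by the average energy balance (Corollary \ref{aveenbal}) and the identity $\|\nabla u\|_{L^2}=\|\omega\|_{L^2}$, the two manifestly nonnegative terms on the second line of $\mathcal{Q}$ combine to $\tfrac{b}{\Ra}\langle|\omega|^2\rangle+\tfrac{b}{\Ra\LL}\big(\langle u_1^2\big|_{x_2=1}\rangle+\langle u_1^2\big|_{x_2=0}\rangle\big)=b(\Nu-1)$, a quantity we keep in reserve to absorb leftover terms proportional to $\Nu$. It then remains to control in absolute value the three indefinite contributions $2\langle\tau'u_2\theta\rangle$, $\tfrac{a}{\LL}\big(\langle p\partial_1u_1\big|_{x_2=1}\rangle+\langle p\partial_1u_1\big|_{x_2=0}\rangle\big)$ and $a\Ra\langle\omega\partial_1\theta\rangle$ by the remaining nonnegative pieces $M\Ra^2$, $\langle|\partial_1\theta|^2\rangle$, $\langle|\partial_2\theta|^2\rangle$, $a\langle|\nabla\omega|^2\rangle$ and a fraction of $b(\Nu-1)$.

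Two of these are relatively routine. For $a\Ra\langle\omega\partial_1\theta\rangle$, Cauchy--Schwarz in space and time followed by Young's inequality give $a\Ra|\langle\omega\partial_1\theta\rangle|\le \tfrac{b}{4\Ra}\langle|\omega|^2\rangle+\tfrac{a^2\Ra^3}{b}\langle|\partial_1\theta|^2\rangle$, and the choice $a\lesssim \Ra^{-3/2}$ makes both pieces absorbable. For the pressure boundary term we would use incompressibility $\partial_1u_1=-\partial_2u_2$ on $\{x_2=0,1\}$, the Sobolev trace inequality, the elliptic bound $\|\partial_2u_2\|_{H^1}\le C\|\partial_1\omega\|_{L^2}$ of \eqref{elliptic:u2:2}, and then the pressure estimate \eqref{bound:p} of Proposition \ref{proppress}; this bounds the term by $\tfrac{a}{\LL}\langle\|p\|_{H^1}\|\partial_1\omega\|_{L^2}\rangle$, which by \eqref{bound:p} splits into $\tfrac{a}{\LL^2}\langle|\partial_1\omega|^2\rangle$, $\tfrac{a\Ra}{\LL}\langle\|T\|_{L^2}\|\partial_1\omega\|_{L^2}\rangle$ and $\tfrac{a}{\LL\Pra}\langle\|\omega\|_{L^2}\|\omega\|_{L^r}\|\partial_1\omega\|_{L^2}\rangle$. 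The first is absorbed into $a\langle|\nabla\omega|^2\rangle$ precisely because $\LL\ge L_0$ with $L_0$ large. The second, after Young's inequality, leaves a piece absorbed by $a\langle|\nabla\omega|^2\rangle$ and a remainder $\lesssim a\Ra^2\LL^{-2}$ controlled by $M\Ra^2$ once $M\gtrsim a\LL^{-2}\sim\LL^{-2}\Ra^{-3/2}$. For the third we freeze $\|\omega\|_{L^r}\lesssim\Ra$ by the uniform vorticity bound \eqref{vorticity:ub} of Lemma \ref{unifenstbdd} and apply Young's inequality, which leaves a remainder $\lesssim a\Ra^2\LL^{-2}\Pra^{-2}\langle|\omega|^2\rangle$ absorbed into $\tfrac{b}{\Ra}\langle|\omega|^2\rangle$ exactly when $\LL^2\Pra^2\gtrsim\Ra^{3/2}$. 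This is precisely where the two standing hypotheses $\LL\ge L_0$ and $\LL^2\Pra^2\ge\Ra^{3/2}$ are consumed, $L_0$ being fixed so that all these absorptions close with room to spare.

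The decisive, and hardest, term is $2\langle\tau'u_2\theta\rangle$: the crude estimate \eqref{est:utt:1} used in Lemma \ref{Lem-Nu12} only tolerates $\delta\sim\Ra^{-1/2}$ and hence merely reproduces $\Ra^{1/2}$, so reaching $\Ra^{5/12}$ requires a sharper bound, here a physical-space substitute for the Fourier argument of \cite{WD11}. The idea is to exploit that $\tau'\equiv-\tfrac1{2\delta}$ is supported on the two boundary layers of width $\delta$, that $u_2$ and $\theta$ both vanish on $\{x_2=0,1\}$, and that incompressibility writes $u_2$ inside these layers as a horizontal derivative, $u_2(x_1,x_2)=-\partial_1\!\int_0^{x_2}u_1(x_1,s)\,\rmd s$; integrating by parts in $x_1$ and estimating the layer contributions of $\theta$ through $\|\partial_1\theta\|_{L^2}$ and $\|\partial_2\theta\|_{L^2}$, and those of the horizontal antiderivative of $u_1$ through the boundary traces $\|u_1\|_{L^2(\{x_2=0,1\})}$ (via the fundamental theorem of calculus in $x_2$) and through $\|\partial_1\omega\|_{L^2}$ (via $\|\partial_2u_2\|_{H^1}\lesssim\|\partial_1\omega\|_{L^2}$), one should obtain a bound carrying enough powers of the small width $\delta$ that, after Young's inequality, it is absorbed into $a\langle|\nabla\omega|^2\rangle$, $\langle|\partial_1\theta|^2\rangle$, $\langle|\partial_2\theta|^2\rangle$, $\tfrac{b}{\Ra}\langle|\omega|^2\rangle$ and $\tfrac{b}{\Ra\LL}\big(\langle u_1^2\big|_{x_2=1}\rangle+\langle u_1^2\big|_{x_2=0}\rangle\big)$ provided only $\delta\lesssim\Ra^{-5/12}$. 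Carrying out this estimate so that the admissible layer width is $\Ra^{-5/12}$ rather than $\Ra^{-1/2}$ is the main obstacle of the whole argument; the infinite-Prandtl case of Remark \ref{whiteheadproof} is where this mechanism appears most cleanly, as the Prandtl-dependent part of the pressure estimate is then absent.

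Once the three indefinite terms are absorbed, we obtain $\mathcal{Q}[\theta,u,\tau]\ge -C$ with $C$ depending only on $\Gamma$, $\|T_0\|_{L^\infty}$ and $\|u_0\|_{W^{1,r}}$ (these enter solely through Lemmas \ref{bndu} and \ref{unifenstbdd}). Plugging this into \eqref{Nubdd} with $b=\tfrac12$, $\delta\sim\Ra^{-5/12}$, $a\sim\Ra^{-3/2}$ and $M\sim\LL^{-2}\Ra^{-3/2}$ yields $\Nu\lesssim\Ra^{5/12}+\LL^{-2}\Ra^{1/2}$ for all $\Ra>1$, which is \eqref{mainbound:Nu}.
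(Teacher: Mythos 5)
Your overall architecture coincides with the paper's: fix $b$, take $a\sim\Ra^{-3/2}$, $M\sim a\LL^{-2}$, treat $a\Ra\langle\omega\partial_1\theta\rangle$ by Cauchy--Schwarz/Young, and handle the pressure boundary term via the trace inequality, $\partial_1u_1=-\partial_2u_2$, \eqref{elliptic:u2:2}, the pressure estimate \eqref{bound:p} and the uniform vorticity bound of Lemma \ref{unifenstbdd}, consuming $\LL\ge L_0$ and $\LL^2\Pra^2\ge\Ra^{3/2}$ exactly where you say. But the decisive ingredient is missing: you never actually establish the improved bound on $2\langle\tau' u_2\theta\rangle$ (the paper's Lemma \ref{est:u2tt:2}(a)), you only assert that ``one should obtain a bound carrying enough powers of $\delta$'' and concede it is the main obstacle. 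Without a quantitative statement there is no way to verify that the layer width $\delta\sim\Ra^{-5/12}$ is admissible, and this is precisely the step that separates the $\Ra^{5/12}$ result from the trivial $\Ra^{1/2}$ bound of Lemma \ref{Lem-Nu12}. Worse, the mechanism you sketch does not obviously close: writing $u_2=-\partial_1\int_0^{x_2}u_1\,\rmd s$, integrating by parts in $x_1$ and estimating the antiderivative of $u_1$ through its boundary trace and through $\|\partial_2u_1\|_{L^2}$ produces, after Young, terms of the form $\delta^2\langle|\omega|^2\rangle$ and $\delta\,\langle u_1^2\big|_{x_2=0,1}\rangle$, whose absorption into $\tfrac{b}{\Ra}\langle|\omega|^2\rangle$ and $\tfrac{b}{\Ra\LL}\langle u_1^2\big|_{x_2=0,1}\rangle$ forces $\delta\lesssim\Ra^{-1/2}$ (indeed $\delta\lesssim(\Ra\LL)^{-1}$ for the trace piece), i.e.\ no better than the crude estimate \eqref{est:utt:1}.

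The paper's actual argument exploits the \emph{quadratic} vanishing of $u_2$ at the wall rather than the stream-function structure in $x_1$: $|\theta(x_1,x_2)|\le\sqrt{x_2}\,\|\partial_2\theta(x_1,\cdot)\|_{L^2}$, $|u_2(x_1,x_2)|\le x_2\|\partial_2u_2(x_1,\cdot)\|_{L^\infty(0,1)}$, the interpolation $\|\partial_2u_2(x_1,\cdot)\|_{L^\infty(0,1)}^2\lesssim\|\partial_2u_2(x_1,\cdot)\|_{L^2}\|\partial_2^2u_2(x_1,\cdot)\|_{L^2}$ (valid because $\partial_2u_2$ has zero mean in $x_2$), and $\|\partial_2^2u_2\|_{L^2}\lesssim\|\partial_1\omega\|_{L^2}$; this yields $\frac1\delta\big|\int_0^\delta u_2\theta\,\rmd x_2\big|\lesssim\delta^{3/2}\|\partial_2\theta\|_{L^2}\|\omega\|_{L^2}^{1/2}\|\partial_1\omega\|_{L^2}^{1/2}$, and Young's inequality with $\ve=a$ gives the crucial $C_0\delta^6a^{-1}\langle|\omega|^2\rangle$ remainder, whose balance against $\tfrac{b}{\Ra}$ with $a\sim\Ra^{-3/2}$ is what produces $\delta\sim\Ra^{-5/12}$ and hence \eqref{mainbound:Nu}. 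A secondary remark: you cannot both rewrite the second line of $\mathcal{Q}$ as $b(\Nu-1)$ ``kept in reserve'' and simultaneously use $\tfrac{b}{\Ra}\langle|\omega|^2\rangle$ as an absorber for the $\omega\partial_1\theta$ and pressure remainders; each nonnegative term may be spent only once, so the bookkeeping should be stated in one form or the other.
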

\begin{proof}
First we use Cauchy-Schwarz and Young's inequality to get
\be
|a\Ra \langle \omega \partial_1 \theta \rangle| \leq \frac{a^2\Ra^2}{2} \langle |\omega|^2\rangle  +  \frac{1}{2} \langle|\partial_1 \theta|^2\rangle,
\ee
so that $\mathcal{Q}$ of Proposition \ref{qdef} enjoys the lower bound
\be\label{Q:9}
\begin{aligned}
\mathcal{Q}[\theta, u, \tau]&\geq  M \Ra^2 +  \frac{1}{2}  \langle|\partial_1 \theta|^2\rangle + \langle|\partial_2 \theta|^2\rangle    +2\langle \tau' u_2\theta\rangle+ \left( \frac{b}{\Ra} - \frac{a^2 \Ra^2}{2}  \right) \langle |\omega|^2\rangle  + a \langle |\nabla \omega|^2\rangle \\
&\qquad + \frac{b}{\Ra\LL}\left( \langle u_1^2\big|_{x_2=1} \rangle + \langle u_1^2\big|_{x_2=0} \rangle \right)  -  \frac{a}{\LL}\left( \langle p\partial_1 u_1 \big|_{x_2=1} \rangle + \langle p\partial_1 u_1 \big|_{x_2=0} \rangle \right).
\end{aligned}
\ee
Note that from the Sobolev trace inequality and the incompressibility, we have
\begin{align*} \nonumber
\frac{a}{\LL}\left| \langle p\partial_1 u_1 \big|_{x_2=1} \rangle + \langle p\partial_1 u_1 \big|_{x_2=0} \rangle \right| \leq  \frac{C_1a}{\LL} \langle \| p\|_{H^1}\|\partial_2 u_2\|_{L^2}\rangle \leq   \frac{C_1a}{\LL} \langle \|\nabla p\|_{L^2}\|\partial_1 \omega\|_{L^2}\rangle,
\end{align*}
 where we used  \eqref{elliptic:u2:2} and $C_1=C_1(\Gamma)$.
To bound the pressure, we recall from \eqref{bound:p} that for any $r\in (2, \infty)$, 
\[
\| p\|_{H^1(\Omega)} \le C\Big(\frac{1}{\LL} \|\partial_1 \omega\|_{L^2(\Omega)}+\Ra \| T\|_{L^2(\Omega)}+ \frac{1}{\Pr}\| \omega\|_{L^2(\Omega)}\|\omega\|_{L^r(\Omega)}\Big). 
\]
Recall also from Lemma \ref{unifenstbdd} that $\| \omega\|_{L^r}\le C(\| u_0\|_{W^{1, r}}+\Ra)$ and hence
\[
C_1\| p\|_{H^1(\Omega)} \le C_2\Big(\frac{1}{\LL} \|\partial_1 \omega\|_{L^2(\Omega)}+\Ra + \frac{\| u_0\|_{W^{1, r}}+\Ra}{\Pr}\| \omega\|_{L^2(\Omega)}\Big). 
\]
Using Young's inequality  yields
\begin{align*}
\frac{aC_1}{\LL} \|\nabla p\|_{L^2}\|\partial_1 \omega\|_{L^2}&\le \frac{aC_2}{\LL^2}\|\partial_1 \omega\|_{L^2}^2+\frac{aC_2}{\LL}\|\partial_1 \omega\|_{L^2}\Big(\Ra + \frac{\| u_0\|_{W^{1, r}}+\Ra}{\Pr} \|\omega\|_{L^2}\Big)\\
& \le  \frac{aC_2}{\LL^2}\|\partial_1 \omega\|^2_{L^2}+\frac{a}{2}\|\partial_1 \omega\|^2_{L^2}+\frac{aC_2^2}{2\LL^2}\left(\Ra^2+ \frac{\| u_0\|^2_{W^{1, r}}}{\Pr^2} \|\omega\|^2_{L^2}+\frac{\Ra^2}{\Pr^2} \|\omega\|_{L^2}^2 \right).
\end{align*}
Choosing $M=\frac{aC_2^2}{2\LL^2}$ in the definition on $\mathcal{Q}$, we find
\begin{align} \nonumber
\mathcal{Q}[\theta, u, \tau]&\geq \frac{1}{2}  \langle|\partial_1 \theta|^2\rangle + \langle|\partial_2 \theta|^2\rangle    +2\langle \tau' u_2\theta\rangle+  \left( \frac{b}{\Ra} - \frac{a^2\Ra^2}{2}  -\frac{aC_2^2\| u_0\|^2_{W^{1, r}}}{2\LL^2\Pr^2}-  \frac{aC_2^2\Ra^2}{2\LL^2\Pr^2}\right) \langle |\omega|^2\rangle\\ \label{Q:10}
&\qquad \qquad+ a \Big(\frac12- \frac{C_2}{\LL^2}\Big) \langle |\nabla \omega|^2\rangle.
\end{align}
\begin{lemma}\label{est:u2tt:2}
For some $C_0>0$ and any $\ve>0$ we have 
\begin{enumerate}
\item[(a)]
\be
|2\langle \tau' u_2\theta\rangle|\le  \frac12\langle|\partial_2 \theta|^2\rangle  +C_0\delta^6\ve^{-1}\langle|\omega|^2\rangle+ \frac{\ve}{4}\langle|\partial_1 \omega|^2\rangle,
\ee
\item[(b)]
\be
|2\langle \tau' u_2\theta\rangle|\le  \frac12\langle|\partial_2 \theta|^2\rangle  +C_0\delta^4\ve^{-\frac23}\langle|\omega|^2\rangle+ \frac{\ve^2}{4}\langle|\partial_1^2 \omega|^2\rangle.
\ee
\end{enumerate}
\end{lemma}
\begin{proof}[Proof of Lemma \ref{est:u2tt:2}]
Note that 
\[
2\int_0^1\tau' u_2\theta\rmd x_2 = \frac{1}{\delta}\left( \int_0^\delta u_2\theta\rmd x_2  + \int_{1-\delta}^1u_2\theta \rmd x_2  \right).
\]
We shall consider the first integral; the second one is treated similarly. Since $\theta$ and $u_2$ vanish on $x_2=0$, we have
\[
| \theta(x_1, x_2)|\le \sqrt{x_2}\| \p_2\theta(x_1, \cdot)\|_{L^2(0, x_2)},\quad |u_2(x_1, x_2)|\le x_2\| \p_2u_2(x_1, \cdot)\|_{L^\infty(0, 1)}\quad\forall x_2\in (0, 1),
\]
where, for the second bound, we used the fundamental theorem of calculus to have $u_2(x_1,x_2)=\int_0^{x_2}\partial_{2}u_2(x_1,z)\, dz\leq x_2\sup_{0\leq z\leq x_2}|\partial_2u_2(x_1,\cdot)|$.
Noting that  $\int_0^1\partial_2u_2(x_1, x_2)dx_2=0$, we deduce $\partial_2 u_2(x_1, z_0)=0$ for some $z_0=z_0(x_1)\in (0, 1)$.  Then by the fundamental theorem of calculus and H\"older's inequality, we obtain 
\be\label{interpolation:u2}
 |\partial_2u_2(x_1, x_2)|^2=2\left|\int_{z_0}^{x_2} \partial_2 u_2(x_1, z)\partial^2_2 u_2(x_1, z)\rmd z\right|\lesssim \| \p_2u_2(x_1, \cdot)\|_{L^2(0, 1)}\| \p_2^2u_2(x_1, \cdot)\|_{L^2(0, 1)}.
\ee
 Applying H\"older's inequality for $x_1$ yields 
\begin{align*}
I:=\frac{1}{\delta}\frac{1}{\Gamma}\left|\int_0^\Gamma \int_0^\delta {u}_2{\theta} \rmd x_2 \rmd x_1\right|&\lesssim \delta^\frac32\frac{1}{\Gamma}\| \p_2\theta\|_{L^2(\Omega)}\| \p_2u_2\|_{L^2(\Omega)}^\frac12 \|\p_2^2u_2\|_{L^2(\Omega)}^\frac12\\
&\le \frac{C}{\Gamma} \delta^\frac32\| \p_2\theta\|_{L^2(\Omega)}\| \omega\|_{L^2(\Omega)}^\frac12 \|\p_1\omega\|_{L^2(\Omega)}^\frac12,
\end{align*}
where we have used Lemma \ref{vortidents} and \eqref{elliptic:u2:2}.
\\
Proof of (a): From the above we have
\begin{align*}
I
&\leq \frac{C}{\Gamma}\| \p_2\theta\|_{L^2(\Omega)}\{\delta^\frac32 \ve^{-\frac14} \| \omega\|_{L^2(\Omega)}^\frac12\}\{\ve^{\frac14}\|\p_1\omega\|_{L^2(\Omega)}^\frac12\}.
\end{align*}
 Taking the time average and using the H\"older and Young inequalities, we deduce 
\[
\langle I\rangle \le  \frac14\langle|\partial_2 \theta|^2\rangle  +C\delta^6\ve^{-1}\langle|\omega|^2\rangle+ \frac{\ve}{8}\langle|\partial_1 \omega|^2\rangle.
\]
Proof of (b): As in \eqref{interpolation:u2}, we have the interpolation inequality 
$
\|\partial_1\omega\|_{L^2(\Omega)}^2
\leq \|\omega\|_{L^2(\Omega)} \|\partial_1^2\omega\|_{L^2(\Omega)}.
$
Thus we obtain the bound
\begin{align*}
I
&\leq \frac{C}{\Gamma}\| \p_2\theta\|_{L^2(\Omega)}\{\delta^\frac32 \ve^{-\frac14} \| \omega\|_{L^2(\Omega)}^\frac34\}\{\ve^{\frac14}\|\p_1^2\omega\|_{L^2(\Omega)}^\frac14\}\\
&\leq \frac{1}{4} \| \p_2\theta\|_{L^2(\Omega)}^2 +  C_0\{\delta^\frac32 \ve^{-\frac14} \| \omega\|_{L^2(\Omega)}^\frac34\}^{\frac 83}  +\frac{1}{8} \{\ve^{\frac14}\|\p_1^2\omega\|_{L^2(\Omega)}^\frac14\}^{8}.
\end{align*}
The proof is complete.
\end{proof}

Applying Lemma \ref{est:u2tt:2} (a) with $\ve=a$ to \eqref{Q:10}, we find
\begin{align} \nonumber
\mathcal{Q}[\theta, u, \tau]&\geq \frac{1}{2}  \langle|\partial_1 \theta|^2\rangle + \frac12\langle|\partial_2 \theta|^2\rangle +  \left( \frac{b}{\Ra}- \frac{a^2 \Ra^2}{2}  -\frac{aC_2^2\| u_0\|^2_{W^{1, r}}}{2\LL^2\Pr^2}-  \frac{aC_2^2\Ra^2}{2\LL^2\Pr^2}-C_0\delta^6a^{-1}\right) \langle |\omega|^2\rangle\\ \label{Q:11}
&\qquad \qquad+ a \Big(\frac14-  \frac{C_2}{\LL^2} \Big) \langle |\nabla \omega|^2\rangle.
\end{align}
Clearly, the coefficient of $\langle |\nabla \omega|^2\rangle$ in \eqref{Q:11} is positive for sufficiently large $\LL$. Fixing an arbitrary $b\in (0, 1)$ and imposing $\LL^2 \Pr^2\ge \Ra^\frac32$ and $a=a_0\Ra^{-\frac32}$ gives 
\[
 A:=\frac{b}{\Ra}- \frac{a^2 \Ra^2}{2}  -\frac{aC_2^2\| u_0\|^2_{W^{1, r}}}{2\LL^2\Pr^2}-  \frac{aC_2^2\Ra^2}{2\LL^2\Pr^2}\ge  \frac{b}{\Ra}- \frac{a_0^2}{2\Ra}  -\frac{a_0C_2^2\| u_0\|^2_{W^{1, r}}}{\Ra^3}-  \frac{a_0C_2^2}{2\Ra}.
 \]
We choose 
\[
a_0=\frac{b}{100C_2^2}\min\big\{1, \frac{\Ra^2}{\| u_0\|^2_{W^{1, r}}}\big\}
\]
so that $A\ge \frac{b}{2\Ra}$. Letting  $\delta$ solve  $\frac{b}{2\Ra}=2C_0\delta^6a_0^{-1}\Ra^\frac32$, the coefficient of $\langle |\omega|^2\rangle$ in \eqref{Q:11} is positive and hence $\mathcal{Q}$ is positive. This gives 
 \[
 \delta=\left(\frac{a_0b}{4C_0}\right)^{\frac16}\Ra^{-\frac{5}{12}}.
 \]
In view of \eqref{Nubdd} with $M=\frac{aC_2^2}{2\LL^2}$, we  obtain 
$
\Nu \le \frac12\big(\frac{4C_0}{a_0b}\big)^{\frac16}\Ra^{\frac{5}{12}}+ \frac{a_0C_2^2}{2}\LL^{-2}\Ra^{\frac12}$. Inserting $a_0$ we finally arrive at \eqref{mainbound:Nu}.
\end{proof}
For $\LL \in (0, L_0)$, we have $\Nu\lesssim \Ra^{\frac12}$ according to Lemma \ref{Lem-Nu12}, and hence the bound \eqref{mainbound:Nu} is still valid. If $\LL=\infty$, the entire argument follows the same way in view of Remark \ref{rema:Poincare}.

\begin{rem}[A proof of the $\Pr=\infty$ result of Whitehead]\label{whiteheadproof}
 If $\Pr=\infty$, the inertial term in the momentum equation vanishes. 
We work in $2d$ for the sake of simplicity. The key observation of Whitehead is that from \eqref{ve1} with $\Pr=\infty$ we have
\be
\langle |\partial_1 \theta|^2 \rangle = \frac{1}{\Ra^2} \langle |\Delta \omega|^2 \rangle \geq \frac{1}{C}  \langle |\partial_1^2 \omega|^2\rangle,
\ee
since $\partial_1 \theta=\partial_1 T$ and according to Lemma \ref{ineqlem2}, we have $ \langle |\partial_1^2 \omega|^2 \rangle \leq  C\langle |\Delta \omega|^2 \rangle $ for some $C>0$ for any $\LL>0$.
Applying Lemma \ref{est:u2tt:2} (b)  to \eqref{Q:10} with $M=a=0$, we find
\begin{align} \nonumber
\mathcal{Q}[\theta, u, \tau]&\geq  \left( \frac{b}{\Ra}-C_0\delta^4\ve^{-\frac{2}{3}}\right) \langle |\omega|^2\rangle+ \Big(\frac{1}{2C\Ra^2}- \frac{\ve^2}{8} \Big) \langle |\partial_1^2 \omega|^2\rangle.
\end{align}
The bound $\mathcal{Q}[\theta, u, \tau]\geq 0$ follows by choosing $\ve= C^{-1/2}\Ra^{-1}$ and $\delta \sim \Ra^{-5/12}$.
\end{rem}

\appendix

\section{Some elliptic estimates}
Here we record some useful identities/inequalities involving the vorticity. 
\begin{lemma}\label{vortidents}
With $\omega = \nabla^\perp \cdot u$,  the following identities hold
\begin{itemize}
\item
$\|\nabla u\|_{L^2}= \|\omega\|_{L^2}$,
\item $\|\Delta u\|_{L^2}= \|\nabla\omega\|_{L^2}$.
\end{itemize}
\end{lemma}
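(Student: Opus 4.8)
The plan is to establish both identities by elementary computations relying only on incompressibility \eqref{e2}, periodicity in $x_1$, and the non-penetration condition \eqref{e6}; smoothness of $u$ up to the boundary (assumed throughout) legitimizes all the integrations by parts.

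For the first identity, I would expand pointwise, writing $\omega^2 = (\partial_1 u_2)^2 + (\partial_2 u_1)^2 - 2\,\partial_1 u_2\,\partial_2 u_1$, so that
\[
|\nabla u|^2 - \omega^2 = (\partial_1 u_1)^2 + (\partial_2 u_2)^2 + 2\,\partial_1 u_2\,\partial_2 u_1 .
\]
Integrating over $\Omega$, I would treat the cross term by two integrations by parts: first in $x_1$, where the periodic boundary contributions cancel, to get $-\int_\Omega u_2\,\partial_2(\partial_1 u_1)\,\rmd x$; then, inserting $\partial_1 u_1 = -\partial_2 u_2$ from \eqref{e2}, this becomes $\int_\Omega u_2\,\partial_2^2 u_2\,\rmd x$, and one more integration by parts in $x_2$ yields $-\int_\Omega (\partial_2 u_2)^2\,\rmd x$, the boundary term $\int_0^\Gamma [u_2\,\partial_2 u_2]_{x_2=0}^{x_2=1}\,\rmd x_1$ vanishing by \eqref{e6}. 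Since moreover $(\partial_1 u_1)^2 = (\partial_2 u_2)^2$ pointwise by incompressibility, the three integrals sum to zero, which gives $\|\nabla u\|_{L^2} = \|\omega\|_{L^2}$.

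For the second identity, I would observe that it is actually a pointwise identity. Equation \eqref{d2vort} already records $\partial_2\omega = -\Delta u_1$; symmetrically, $\partial_1\omega = \partial_1^2 u_2 - \partial_1\partial_2 u_1 = \partial_1^2 u_2 + \partial_2^2 u_2 = \Delta u_2$, where I used $\partial_1 u_1 = -\partial_2 u_2$ once more. Hence $\Delta u = (-\partial_2\omega,\,\partial_1\omega) = \nabla^\perp\omega$, so that $|\Delta u|^2 = |\partial_2\omega|^2 + |\partial_1\omega|^2 = |\nabla\omega|^2$ pointwise, and integrating gives $\|\Delta u\|_{L^2} = \|\nabla\omega\|_{L^2}$.

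There is no serious obstacle here: the only point requiring care is the bookkeeping of boundary terms in the first identity, and these are killed entirely by $x_1$-periodicity and by $u_2|_{x_2=0,1}=0$. If one prefers, the first identity can alternatively be read off from the standard integration-by-parts formula $\int_\Omega |\nabla u|^2 = \int_\Omega |\curl u|^2 + \int_\Omega |\div u|^2 + (\text{boundary terms})$ in the present geometry, but the direct computation above is self-contained and keeps the needed boundary conditions explicit.
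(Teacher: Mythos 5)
Your proof is correct. For the second identity you do exactly what the paper does: observe $\Delta u=\nabla^\perp\omega$ (equivalently $\partial_2\omega=-\Delta u_1$, $\partial_1\omega=\Delta u_2$) and conclude pointwise. For the first identity your route differs slightly from the paper's: you expand $|\nabla u|^2-\omega^2=(\partial_1u_1)^2+(\partial_2u_2)^2+2\,\partial_1u_2\,\partial_2u_1$ and integrate the cross term by parts (once in $x_1$, once in $x_2$), so that every boundary contribution vanishes outright thanks to periodicity and $u_2|_{x_2=0,1}=0$, and the remaining bulk terms cancel via $\partial_1u_1=-\partial_2u_2$. The paper instead writes $\int|\nabla u|^2=-\int u\cdot\Delta u+\int_0^\Gamma u_1\partial_2u_1\big|_{x_2=0}^{x_2=1}\,\rmd x_1$, substitutes $\Delta u=\nabla^\perp\omega$, and integrates back by parts; there the individual boundary terms do not vanish but combine into $\int_0^\Gamma u_1(\partial_2u_1+\omega)\big|_{x_2=0}^{x_2=1}\rmd x_1$, which is zero because $\partial_2u_1+\omega=\partial_1u_2=0$ on the walls. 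Both arguments use only incompressibility, periodicity and non-penetration (neither needs the Navier-slip relation itself); yours is marginally more elementary in that no cancellation between boundary terms is required, while the paper's has the small advantage of reusing the identity $\Delta u=\nabla^\perp\omega$ that it needs anyway for the second bullet.
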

\begin{proof}
The second identity is a consequence of $\Delta u=\nabla^\perp \omega$. Next we prove the first identity. By the periodicity in $x_1$ and the boundary condition $u_2=0$ on $\{x_2=0\}\cup \{x_2=1\}$, we have  
\begin{align*}
\sum_{i, j=1, 2}\int_\Omega \p_j u_i\p_j u_i \rmd x&=-\int_\Omega u\cdot \Delta u \rmd x+\int_0^\Gamma u_1\p_2u_1\Big|_{x_2=0}^{x_2=1}\rmd x_1\\
&=-\int_\Omega u\cdot \nabla^\perp \omega \rmd x+\int_0^\Gamma u_1\p_2u_1\Big|_{x_2=0}^{x_2=1}\rmd x_1\\
&=\int_\Omega |\omega|^2 \rmd x+\int_0^\Gamma u_1(\p_2u_1+\omega)\Big|_{x_2=0}^{x_2=1}\rmd x_1=\int_\Omega |\omega|^2 \rmd x,
\end{align*}
where we have used that $\p_2u_1+\omega=\p_1u_2=0$ on $\p\Omega$. 
\end{proof}
\begin{lemma}\label{ineqlem}
For any $m\ge 1$ and $p\in (1, \infty)$, there exists $C$ such that $\|\nabla u\|_{W^{m,p}}\leq  C\|\omega\|_{W^{m,p}}$.
\end{lemma}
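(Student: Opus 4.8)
The plan is to split the vector estimate into two \emph{decoupled} scalar elliptic boundary value problems, one per component of $u$, and apply standard $L^p$ elliptic regularity to each. Using incompressibility $\pa_1u_1=-\pa_2u_2$ exactly as in the derivation of \eqref{d2vort}, one has $\Delta u_1=-\pa_2\omega$ and, by the symmetric computation, $\Delta u_2=\pa_1\omega$ throughout $\Omega$. For the boundary conditions, \eqref{e6} is the homogeneous Dirichlet condition $u_2=0$ on $\pa\Omega$, while \eqref{e4}--\eqref{e5} can be rewritten as the homogeneous Robin condition $\pa_\nu u_1+\tfrac1\LL u_1=0$ on $\pa\Omega$ (with the \emph{positive} coefficient $\tfrac1\LL$, since $\LL>0$), $\nu$ being the outward unit normal. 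Regarding $\omega$ as given data, $u_2$ thus solves a Dirichlet--Poisson problem with forcing $\pa_1\omega$ and $u_1$ a Robin--Poisson problem with forcing $-\pa_2\omega$, both $\Gamma$-periodic in $x_1$.

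I would then invoke $L^p$ elliptic regularity for these two problems. After unrolling the periodic variable, $\Omega$ becomes the flat cylinder $(\R/\Gamma\mathbb{Z})\times[0,1]$, a smooth compact manifold with flat boundary and no corners, so the interior Calder\'on--Zygmund estimate together with the standard flat-boundary estimates for the Dirichlet and Robin Laplacians apply directly. Moreover each problem is uniquely solvable: for the Dirichlet problem this is classical, and for the Robin problem it follows from the coercivity on $W^{1,2}(\Omega)$ of the bilinear form $(v,w)\mapsto\int_\Omega\na v\cdot\na w\,\rmd x+\tfrac1\LL\int_{\pa\Omega}vw\,\rmd S$, whose quadratic form vanishes only when $v$ is a constant with vanishing trace, i.e. $v\equiv0$. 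Hence the a priori estimates carry no lower-order term, and for a constant $C$ depending only on $m,p,\Gamma,\LL$ one has
\[
\|u_2\|_{W^{m+1,p}(\Omega)}\le C\|\pa_1\omega\|_{W^{m-1,p}(\Omega)},\qquad \|u_1\|_{W^{m+1,p}(\Omega)}\le C\|\pa_2\omega\|_{W^{m-1,p}(\Omega)}.
\]
Since $\pa_j$ maps $W^{m,p}(\Omega)$ boundedly into $W^{m-1,p}(\Omega)$, both right-hand sides are $\le C\|\omega\|_{W^{m,p}(\Omega)}$; adding the two estimates and using $\|\na u\|_{W^{m,p}}\lesssim\|u_1\|_{W^{m+1,p}}+\|u_2\|_{W^{m+1,p}}$ gives the claim. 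The same computation with $m=0$ (reading $W^{0,p}=L^p$) gives the bound $\|\na u\|_{L^p}\lesssim\|\omega\|_{L^p}$ used elsewhere in the paper.

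The argument is essentially bookkeeping; the one genuinely structural ingredient is the positivity of the Robin coefficient $\tfrac1\LL$, which is what makes the $u_1$-problem well-posed and thereby allows its a priori estimate to dispense with any lower-order term. This is also the step that degenerates in the free-slip limit $\LL\to\infty$: the Robin condition becomes Neumann, $u_1$ is then determined only modulo an additive constant, one must restrict to $x_1$-mean-zero fields (cf.\ Remark~\ref{rema:Poincare}), and the constant $C$ above deteriorates accordingly. Apart from this, no step presents difficulty: the flatness of $\pa\Omega$ removes the usual technicalities from the boundary estimates, and the decoupling of the two components avoids having to treat $\Delta u=\na^\perp\omega$ as a genuine system.
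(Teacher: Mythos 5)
Your decomposition into two scalar problems is internally consistent (the identities $\Delta u_2=\pa_1\omega$, $\Delta u_1=-\pa_2\omega$ and the rewriting of \eqref{e4}--\eqref{e5} as the Robin condition $\pa_\nu u_1+\tfrac1\LL u_1=0$ are all correct), and the Dirichlet half for $u_2$ is in substance what the paper does. But the Robin half creates a genuine problem: as you yourself note, the constant you obtain depends on $\LL$ and degenerates as $\LL\to\infty$, since the coercivity constant of the form $\int_\Omega|\na v|^2\,\rmd x+\tfrac1\LL\int_{\pa\Omega}v^2\,\rmd S$ is of order $\tfrac1\LL$ (test with $v\equiv 1$), and in the limit the problem for $u_1$ is Neumann and only determined modulo constants. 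That $\LL$-dependence is not acceptable for the way the lemma is used in the paper: Lemma \ref{unifenstbdd} invokes $\|\na u\|_{L^p}\lesssim\|\omega\|_{L^p}$ and asserts a constant $C=C(p,\Gamma)$ uniformly for all $\LL\ge 1$; the regime of interest in the main theorem is precisely $\LL$ large (of order $\Ra^{3/4}$ or larger), the constants from the vorticity bound propagate into the quantitative choices of $a_0$, $M$ and the final Nusselt bound, and the end of the proof of Theorem 1 (and Remark \ref{whiteheadproof}) uses the case $\LL=\infty$ outright. So a bound whose constant blows up in the free-slip limit proves a strictly weaker statement than the one the paper needs.

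The paper's proof avoids this entirely by never posing a boundary-value problem for $u_1$. It introduces the streamfunction $\psi$ with $u=\na^\perp\psi$, notes that $u_2=\pa_1\psi$ solves the homogeneous Dirichlet problem with datum $\pa_1\omega$ (this is your $u_2$ step, giving \eqref{elliptic:u2:2} with $C=C(m,p,\Gamma)$), and then recovers $\na u_1$ purely algebraically: $\pa_1 u_1=-\pa_2 u_2$ by incompressibility and $\pa_2 u_1=\pa_1 u_2-\omega$ by the definition of the vorticity, so $\|\na u_1\|_{W^{m,p}}\le C(\|\na u_2\|_{W^{m,p}}+\|\omega\|_{W^{m,p}})\le C\|\omega\|_{W^{m,p}}$ with no reference to \eqref{e4}--\eqref{e5} at all, hence a constant independent of $\LL$ (and valid verbatim at $\LL=\infty$). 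Your argument is repaired by exactly this substitution: keep your Dirichlet estimate for $u_2$, discard the Robin problem, and express the two entries of $\na u_1$ through $\na u_2$ and $\omega$ as above; the identity $\Delta u_1=-\pa_2\omega$ is then never needed.
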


\begin{proof}
Let $\psi$ be the streamfunction for $u$, i.e. $u=\nabla^\perp \psi$ such that 
\begin{align*}
  \Delta \psi  &=  \omega \qquad\quad \ \ \text{in}\ \  \Omega\,,\\  
\qquad \psi &= 0 \qquad \ \ \  \ \ \text{on}\ \   \{x_2=0\},\\
\qquad \psi &= c(t) \qquad \ \  \text{on}\ \   \{x_2=1\},
\end{align*}
for some possibly time dependent but spatially constant $c(t)$. Consequently, $\p_1\psi$ satisfies 
\begin{align} \label{es1}
  \Delta \p_1\psi  &=  \p_1\omega \qquad\quad \ \ \text{in}\ \  \Omega\,,\\   \label{es2}
\qquad \p_1\psi &= 0 \qquad \ \ \  \ \ \text{on}\ \   \{x_2=0\}\cup \{x_2=1\}.
\end{align}
Fix $k\ge 1$ and $p\in (1, \infty)$. By elliptic regularity, we have
\begin{align}\label{elliptic:u2:1}
&\|\nabla u_2\|_{L^p}=\|\nabla\partial_1\psi \|_{L^p}\le C\| \omega\|_{L^p},\\ \label{elliptic:u2:2}
&\|u_2\|_{W^{1+k, p}}=\|\partial_1\psi \|_{W^{1+k, p}} \leq C \|\partial_1\omega\|_{W^{k-1, p}}.
\end{align}
Now note that by divergence-free and the definition of the vorticity we have $\partial_1 u_1 = - \partial_2 u_2$ and $\partial_2 u_1  = \partial_1 u_2 - \omega$. Therefore, for any $m\ge 0$, we have the bound
\begin{align*}
&\|\nabla u_1\|_{W^{m, p}} \leq C\left( \|\nabla u_2\|_{W^{m, p}}  +  \|\omega\|_{W^{m, p}} \right) \leq C \|\omega\|_{W^{m, p}}.
\end{align*}
\end{proof}

\begin{lemma}\label{ineqlem2}
With $\omega= \nabla^\perp \cdot u$, we have $\|\partial_1 \omega\|_{L^2} \leq C\|\Delta \omega\|_{L^2}$ for some $C>0$.  
\end{lemma}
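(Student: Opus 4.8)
The plan is to exploit the identity $\partial_1\omega=\Delta u_2$, which follows from $\Delta u=\nabla^\perp\omega$ (see the proof of Lemma \ref{vortidents}), together with two integrations by parts that rewrite $\|\partial_1\omega\|_{L^2}^2$ as a pairing against $\Delta\omega$ in which every boundary contribution is either harmless or has a favorable sign.

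First I would write
\[
\|\partial_1\omega\|_{L^2}^2=\int_\Omega(\Delta u_2)\,\partial_1\omega\,\rmd x=-\int_\Omega\nabla u_2\cdot\nabla(\partial_1\omega)\,\rmd x+\int_{\partial\Omega}(\partial_n u_2)\,\partial_1\omega\,\rmd S,
\]
where periodicity in $x_1$ kills the lateral boundary terms. On $\{x_2=1\}$, the Navier-slip condition \eqref{ve3} together with incompressibility gives $\partial_1\omega=\tfrac1\LL\partial_1u_1=-\tfrac1\LL\partial_2u_2$, so that piece equals $-\tfrac1\LL\int_0^\Gamma(\partial_2u_2)^2\,\rmd x_1\le 0$; similarly on $\{x_2=0\}$ using \eqref{ve2}. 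Hence both boundary integrals may be discarded, leaving $\|\partial_1\omega\|_{L^2}^2\le-\int_\Omega\nabla u_2\cdot\nabla(\partial_1\omega)\,\rmd x$.

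Next, using $\Gamma$-periodicity in $x_1$ to transfer $\partial_1$ from $\nabla\omega$ onto $\nabla u_2$ (no $x_1$-boundary term) and then Green's identity in $x_2$, I obtain
\[
-\int_\Omega\nabla u_2\cdot\nabla(\partial_1\omega)\,\rmd x=\int_\Omega\nabla(\partial_1u_2)\cdot\nabla\omega\,\rmd x=-\int_\Omega(\partial_1u_2)\,\Delta\omega\,\rmd x,
\]
the boundary term now vanishing because $u_2\equiv0$ on $\{x_2=0\}\cup\{x_2=1\}$ forces $\partial_1u_2\equiv0$ there. Therefore $\|\partial_1\omega\|_{L^2}^2\le\|\partial_1u_2\|_{L^2}\|\Delta\omega\|_{L^2}$. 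To close the estimate, I would test $\Delta u_2=\partial_1\omega$ against $u_2$: since $u_2$ vanishes on the top and bottom and is periodic in $x_1$, $\|\nabla u_2\|_{L^2}^2=-\int_\Omega u_2\,\partial_1\omega\,\rmd x\le\|u_2\|_{L^2}\|\partial_1\omega\|_{L^2}\le\|\partial_2u_2\|_{L^2}\|\partial_1\omega\|_{L^2}$ by the Poincaré inequality on $(0,1)$, hence $\|\partial_1u_2\|_{L^2}\le\|\nabla u_2\|_{L^2}\le\|\partial_1\omega\|_{L^2}$. Substituting and dividing by $\|\partial_1\omega\|_{L^2}$ yields $\|\partial_1\omega\|_{L^2}\le C\|\Delta\omega\|_{L^2}$ with $C$ depending only on $\Omega$, in particular independent of $\LL$ and $\Pra$, as needed for Remark \ref{whiteheadproof}.

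The only delicate point is the accounting of the two boundary integrals in the first identity: one is generated by the Navier-slip boundary condition and must be checked to carry the correct (nonpositive) sign via incompressibility, while the other is generated by non-penetration and vanishes identically. Everything else is routine integration by parts and a one-dimensional Poincaré inequality.
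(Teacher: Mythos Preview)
Your proof is correct for the lemma as stated, and is more elementary than the paper's. The paper works one derivative higher: it computes $\int_\Omega \Delta^2(\partial_1 u_2)\,\partial_1 u_2\,\rmd x$, uses the Dirichlet condition on $\partial_1 u_2$ together with the Navier-slip relations \eqref{e4}--\eqref{e5} to show the resulting boundary term is nonnegative, and concludes $\|\Delta\partial_1 u_2\|_{L^2}\le C\|\Delta\omega\|_{L^2}$, i.e.\ $\|\partial_1^2\omega\|_{L^2}\le C\|\Delta\omega\|_{L^2}$. Both arguments hinge on the same mechanism---the Navier-slip condition, combined with incompressibility, forces the boundary contribution to have a favorable sign---but the paper's version delivers the stronger second-order bound, which is in fact what Remark~\ref{whiteheadproof} invokes (it uses $\langle|\partial_1^2\omega|^2\rangle\le C\langle|\Delta\omega|^2\rangle$, not the first-order estimate in the lemma's stated form). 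Your route buys a self-contained argument with no appeal to elliptic regularity; the paper's buys the extra derivative needed downstream. Your scheme does upgrade: replace $u_2$ by $\partial_1 u_2$ throughout (it still vanishes on $\{x_2=0,1\}$ and the boundary sign check is identical), then close with the elliptic estimate $\|\partial_1^3 u_2\|_{L^2}\le C\|\Delta\partial_1 u_2\|_{L^2}=C\|\partial_1^2\omega\|_{L^2}$ for the Dirichlet problem \eqref{es1}--\eqref{es2}.
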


\begin{proof}
From \eqref{es1}--\eqref{es2} we have $ \Delta \partial_1 u_2 =  \p_1^2\omega$ in $\Omega$ and  $\partial_1 u_2= 0$ on  $\{x_2=0\}\cup \{x_2=1\}$ since $\partial_1$ is a tangential derivative. It follows
\[
 \int_\Omega \Delta^2\partial_1 u_2 \partial_1 u_2\rmd x_1 \rmd x_2  =   \int_\Omega  \Delta \p_1^2\omega\partial_1 u_2\rmd x_1 \rmd x_2.
\]
First note 
\begin{align*}
 \int_\Omega \Delta^2\partial_1 u_2 \partial_1 u_2\rmd x_1 \rmd x_2  &= - \int_\Omega \nabla \Delta \partial_1 u_2 \cdot \nabla \partial_1 u_2 \rmd x_1 \rmd x_2 \\
    &= \|\Delta \partial_1 u_2\|_{L^2(\Omega)}^2 -   \int_0^\Gamma \partial_2^2 \partial_1 u_2 \partial_2 \partial_1 u_2 \rmd x_1 \rmd x_2 \Big|_{x_2=0}^1\\
        &= \|\Delta \partial_1 u_2\|_{L^2(\Omega)}^2 -   \int_0^\Gamma   \partial_1^2 \partial_2 u_1  \partial_1^2  u_1\rmd x_1 \rmd x_2 \Big|_{x_2=0}^1\\
                &= \|\Delta \partial_1 u_2\|_{L^2(\Omega)}^2 +\frac{1}{\LL}    \int_0^\Gamma  (  \partial_1^2  u_1)^2 \rmd x_1 \rmd x_2 \Big|_{x_2=1} +\frac{1}{\LL}     \int_0^\Gamma  (  \partial_1^2  u_1)^2 \rmd x_1 \rmd x_2 \Big|_{x_2=0}\\
                &\geq \|\Delta \partial_1 u_2\|_{L^2(\Omega)}^2,
\end{align*}
where we used incompressibility, the fact that $\partial_1^3 u_2$ is zero on the boundary and the boundary conditions \eqref{e4}--\eqref{e5}.  On the other hand
\begin{align*}
 \int_\Omega  \Delta \p_1^2\omega\partial_1 u_2\rmd x_1 \rmd x_2=  \int_\Omega  \Delta  \omega\partial_1^3u_2\rmd x_1 \rmd x_2 \leq \|\Delta \omega \|_{L^2(\Omega)} \|\partial_1^3 u_2 \|_{L^2(\Omega)}  \leq C\|\Delta \omega \|_{L^2(\Omega)} \|\Delta \partial_1 u_2 \|_{L^2(\Omega)}\,,
\end{align*}
where we used that, since $ \partial_1 u_2=0$ on the boundary, elliptic regularity tells us  $\|\partial_1^3 u_2 \|_{L^2(\Omega)} \leq \|\partial_1 u_2 \|_{H^2(\Omega)} \leq C \|\Delta \partial_1 u_2 \|_{L^2(\Omega)}$. Finally since 
$ \Delta \partial_1 u_2 \partial_1^2 \omega$, we are done.
\end{proof}

 \subsection*{Acknowledgments}   
We would like to remember and thank Charlie for his advice and encouragement, as well as for sharing his vision of science with us.  We thank J. Whitehead for insightful remarks and for letting us know about his unpublished result in the infinite Prandlt number case. We also thank  D. Goluskin and V. Martinez  for  useful discussions, and gratefully acknowledge Johannes L\"{u}lff for allowing us to use his simulation data to produce Figure \ref{fig:RB} (see \cite{L15} for simulation details).
Research of TD was partially supported by  NSF grant DMS-2106233.  HQN was partially supported by NSF grant DMS-19077. Research of CN was partially supported by the DFG-GrK2583 and DFG-TRR181.

\end{document}